\newtheorem{theorem}{Theorem}
\newtheorem{lemma}[theorem]{Lemma}
\newtheorem{remark}[theorem]{Remark}
\newenvironment{proof}[1][Proof]{\noindent\textbf{#1.} }{\ \rule{0.5em}{0.5em}}
\numberwithin{equation}{section}
\def \dsum {\sum}
\def\QATOPD#1#2#3#4{{#3 \atopwithdelims#1#2 #4}}
\begin{document}

\title{Population processes sampled at random times}
\author{Luisa Beghin\thanks{%
e-mail: \texttt{luisa.beghin@uniroma1.it}} \and Enzo Orsingher\thanks{%
e-mail: enzo.orsingher@uniroma1.it}}
\date{}
\maketitle

\begin{abstract}
\noindent In this paper we study the iterated birth process of which we
examine the first-passage time distributions and the hitting probabilities.
Furthermore, linear birth processes, linear and sublinear death processes at
Poisson times are investigated. In particular, we study the hitting times in
all cases and examine their long-range behavior.

The time-changed population models considered here display upward (birth
process) and downward jumps (death processes) of arbitrary size and, for
this reason, can be adopted as adequate models in ecology, epidemics and
finance situations, under stress conditions.

\textbf{Keywords and phrases}: Yule-Furry process, linear and sublinear
death processes, hitting times, extinction probabilities, first-passage
times, Stirling numbers, Bell polynomials.

\noindent \emph{AMS Mathematical Subject Classification (2010): 60G55, 60J80.%
}
\end{abstract}

\section{Introduction}

We study here different compositions of point processes, say $K(t):=L(H(t)),$
$t\geq 0,$ where $L$ and $H$ are mutually independent and can possibly be
birth, death or homogeneous Poisson processes. These compositions arise in
the analysis of different population models, when, in some specific
experimental circumstances, the time $t$ must be replaced by a stochastic
process $H(t),$ $t\geq 0.$ In our analysis we concentrate our attention on
the case where the time increments are unit-valued and thus are properly
represented by point processes, such as linear birth and Poisson processes.
In these cases the composed process can be regarded as a randomly sampled
population model.

The simplest case of the composition of two independent homogeneous Poisson
processes (with different rates) has been already studied in \cite{OP}. More
recently, the iterated Poisson process has been considered in \cite{DIC}.

Our investigation concentrates here on the distribution of the first-passage
times, i.e.%
\begin{equation*}
T_{k}:=\inf \{t\geq 0:K(t)=k\},\quad k\geq 0,
\end{equation*}%
which, in this context, substantially differ from the upcrossing and
downcrossing times, i.e.%
\begin{equation*}
U_{k}:=\inf \{t\geq 0:K(t)\geq k\},\qquad V_{k}:=\inf \{t\geq 0:K(t)\leq
k\},\quad k\geq 0.
\end{equation*}%
Indeed, the composed processes $K$ jump over (resp. under) any level $k$
with positive probability. The hitting probabilities $\Pr (T_{k}<\infty )$
display different behaviors. For example, for the iterated birth process $%
K(t)=B_{1}(B_{2}(t))$ (with $B_{1},B_{2}$ independent linear birth
processes), the probabilities $\Pr (T_{k}<\infty )$ attain their maximal
value at $k=3$. In the linear death process at Poisson distributed time the
hitting probabilities display an oscillating behavior, which can be
perceived when the initial size of the population is sufficiently large.

Furthermore, we observe that in all the cases considered here the hitting
probabilities $\Pr (T_{k}<\infty )$ depend only on the parameter of the
outer process.

Moreover, the processes $K$ display sample paths with upward or downward
jumps of arbitrary size. Other models with the same feature, such as the
space-fractional Poisson processes or, in general, time-changed Poisson
processes with Bern\v{s}tein subordinators, have been analyzed in \cite{ORT}%
. Multiple jumps are also displayed by the generalized fractional birth
processes studied in \cite{ABR}. As in this last work, this property is
reflected in the form of the equation governing the state probabilities $%
p_{k}(t):=\Pr \{K(t)=k\},$ $k\geq 0,$ where the time derivative $%
p_{k}^{\prime }(t)$ is shown to depend, also here, on all $p_{k-j}(t)$, for $%
j=0,...,k.$

In particular, our analysis concerns the following specific cases:

\begin{itemize}
\item $B_{1}(B_{2}(t)),$ $t\geq 0$\qquad (section 3)

\item $B(N(t)),$ $t\geq 0$\qquad (section 4)

\item $D(N(t)),$ $t\geq 0$\qquad (section 5)

\item $\widetilde{D}(N(t)),$ $t\geq 0$\qquad (section 6),
\end{itemize}

\noindent where $B_{1}$ and $B_{2}$ are independent linear birth
(Yule-Furry) processes, $N$ is the homogeneous Poisson process, $D$ is the
linear death process and $\widetilde{D}$ is the sublinear death process. The
latter is characterized by the fact that the probability of a further death
in $[t,t+dt)$ depends on the number of deaths recorded up to time $t$
(unlike the linear case where this probability depends on the number of
surviving individuals). The sublinear birth process was introduced in \cite%
{DON}, while the sublinear death process has been investigated in \cite{OPS}%
, in a fractional context.

The processes considered here can be useful to model the population
evolution sampled at Poisson times. The iterated birth process can be more
appropriate for the cases where the time separating occurrences is rapidly
decreasing. This structure can be applied, for example, in some experimental
studies of diseases or epidemic diffusions.

Time-changed birth processes of different forms have been studied in \cite%
{DIN} and \cite{DIN2}, with various applications to finance. Time-changed
Poisson and birth processes arise also in the study of fractional point
processes (see, for example, \cite{KUM}, \cite{CAH1}, \cite{CAH3} and \cite%
{CAH2}).

\section{Notations}

\textbf{List of the main symbols}

\begin{equation*}
\begin{array}{lll}
& \text{PROCESS} & \text{STATE PROBAB.} \\
\text{Linear birth} & B_{\alpha }(t) &  \\
\text{Poisson} & N_{\lambda }(t) &  \\
\text{Linear death} & D_{\mu }(t) & p_{k}^{D}(t),\;0\leq k\leq n_{0} \\
\text{Sublinear death } & \widetilde{D}_{\mu }(t) & p_{k}^{\widetilde{D}%
}(t),\;0\leq k\leq n_{0} \\
\text{Iterated birth} & \mathcal{Z}(t)=B_{\alpha }(B_{\lambda }(t)) & q_{k}^{%
\mathcal{Z}}(t),\;k\geq 1 \\
\text{Linear birth at Poisson times} & \mathcal{X}(t)=B_{\alpha }(N_{\lambda
}(t)) & q_{k}^{\mathcal{X}}(t),\;k\geq 1 \\
\text{Linear death at Poisson times} & \mathcal{Y}(t)=D_{\mu }(N_{\lambda
}(t)) & q_{k}^{\mathcal{Y}}(t),\;0\leq k\leq n_{0} \\
\text{Sublinear death at Poisson times} & \widetilde{\mathcal{Y}}(t)=%
\widetilde{D}_{\mu }(N_{\lambda }(t)) & q_{k}^{\widetilde{Y}}(t),\;0\leq
k\leq n_{0}%
\end{array}%
\end{equation*}%
where $n_{0}\geq 1$ is the number of the initial components of the
population and $\lambda ,\mu ,\alpha >0.$

\section{The iterated linear birth process}

Let us first consider the composition of a linear birth (Yule-Furry) process
$B_{\alpha }:=B_{\alpha }(t),t\geq 0$, with one initial progenitor and birth
rate $\alpha >0,$ with an independent process of the same kind, with
parameter $\lambda >0,$ i.e.%
\begin{equation}
\mathcal{Z}(t):=B_{\alpha }(B_{\lambda }(t)),\qquad t\geq 0.  \label{def}
\end{equation}%
As a consequence of the definition (\ref{def}), the initial number of
progenitors of the iterated linear birth process is random, since $\mathcal{Z%
}(0)=B_{\alpha }(1)$ a.s.

The probability mass function of this process can be written, for any $k\geq
1,$ as
\begin{eqnarray*}
q_{k}^{\mathcal{Z}}(t)&:=&\Pr \{\mathcal{Z}(t)=k\}=\mathbb{E}\Pr
\{B_{\alpha }(B_{\lambda }(t))=k|B_{\lambda }(t)\} \\
&=&\dsum\limits_{j=1}^{\infty }\Pr \{B_{\alpha }(j)=k\}\Pr \{B_{\lambda
}(t)=j\} \\
&=&e^{-\lambda t}\dsum\limits_{j=1}^{\infty }e^{-\alpha j}\left(
1-e^{-\alpha j}\right) ^{k-1}\left( 1-e^{-\lambda t}\right) ^{j-1} \\
&=&e^{-\lambda t-\alpha }\sum_{l=0}^{k-1}\binom{k-1}{l}\frac{(-e^{-\alpha
})^{l}}{1-e^{-\alpha (l+1)}(1-e^{-\lambda t})}.
\end{eqnarray*}%
We observe that%
\begin{eqnarray*}
q_{1}^{\mathcal{Z}}(t) &=&\frac{e^{-\lambda t-\alpha }}{1-e^{-\alpha
}(1-e^{-\lambda t})}=\mathbb{E}e^{-\alpha B_{\lambda }(t)} \\
q_{2}^{\mathcal{Z}}(t) &=&q_{1}^{\mathcal{Z}}(t)\frac{1-e^{-\alpha }}{%
1-e^{-2\alpha }(1-e^{-\lambda t})}\leq q_{1}^{\mathcal{Z}}(t).
\end{eqnarray*}%
We prove now that the probabilities $q_{k}^{\mathcal{Z}}(t),$ $k\geq 1,$ $%
t\geq 0$ decrease as $k$ increases, since we have that%
\begin{eqnarray*}
q_{k}^{\mathcal{Z}}(t)-q_{k-1}^{\mathcal{Z}}(t) &=&e^{-\lambda t-\alpha }%
\left[ \sum_{l=1}^{k-2}\binom{k-2}{l-1}\frac{(-e^{-\alpha })^{l}}{%
1-e^{-\alpha (l+1)}(1-e^{-\lambda t})}+\frac{(-e^{-\alpha })^{k-1}}{%
1-e^{-\alpha k}(1-e^{-\lambda t})}\right]  \\
&=&e^{-\lambda t-\alpha }\sum_{l=1}^{k-1}\binom{k-2}{l-1}\frac{(-e^{-\alpha
})^{l}}{1-e^{-\alpha (l+1)}(1-e^{-\lambda t})} \\
&=&-e^{-\lambda t-2\alpha }\sum_{m=0}^{\infty }(1-e^{-\lambda
t})^{m}e^{-2\alpha m}\sum_{l=0}^{k-2}\binom{k-2}{l}(-1)^{l}e^{-\alpha l(m+1)}
\\
&=&-e^{-\lambda t-2\alpha }\sum_{m=0}^{\infty }(1-e^{-\lambda
t})^{m}e^{-2\alpha m}(1-e^{-\alpha (m+1)})^{k-2}<0,
\end{eqnarray*}%
for any $k\geq 2.$

The sample paths of the iterated birth process display upward jumps of size
larger or equal to one. For this reason the analysis of the first-passage
time through an arbitrary level $k\geq 2$, i.e.%
\begin{equation*}
T_{k}^{\mathcal{Z}}=\inf \{t>0:\mathcal{Z}(t)=k\},
\end{equation*}%
is of a certain importance. Moreover, we shall prove that $\Pr \{T_{k}^{%
\mathcal{Z}}<\infty \}$ is strictly less than one, as it happens for the
iterated Poisson process (see \cite{OPS}) and thus any level $k$ can be
avoided with positive probability, because of "multiple jumps". We present
the explicit distribution of $T_{k}^{\mathcal{Z}}$ in the next theorem.

\begin{theorem}
The distribution of $T_{k}^{\mathcal{Z}}$ reads%
\begin{equation}
\Pr \{T_{k}^{\mathcal{Z}}\in dt\}/dt=\lambda e^{-\lambda t-\alpha
k}\sum_{j=1}^{\infty }je^{-\alpha j}(1-e^{-\lambda t})^{j-1}\left[
(e^{\alpha }-e^{-\alpha j})^{k-1}-(1-e^{-\alpha j})^{k-1}\right] ,
\label{bb}
\end{equation}%
for any $k\geq 2,$ $0\leq t<\infty .$
\end{theorem}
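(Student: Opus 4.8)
The plan is to use that $\mathcal{Z}(t)=B_{\alpha}(B_{\lambda}(t))$ is non-decreasing in $t$, being the composition of two non-decreasing processes, so that it can reach the level $k$ only at a jump epoch of the outer clock $B_{\lambda}$. Let $\tau_{i}$ be the $i$-th jump time of $B_{\lambda}$, i.e.\ the instant at which $B_{\lambda}$ passes from $i$ to $i+1$; on $[\tau_{i-1},\tau_{i})$ (with $\tau_{0}=0$) we have $\mathcal{Z}\equiv B_{\alpha}(i)$. Hence the first passage occurs at $t=\tau_{i}$ exactly when the value jumps from $B_{\alpha}(i)<k$ up to $B_{\alpha}(i+1)=k$, and, since the two birth processes are independent, I would write
\begin{equation*}
\Pr \{T_{k}^{\mathcal{Z}}\in dt\}=\sum_{i=1}^{\infty }\Pr \{\tau _{i}\in dt\}\,\Pr \{B_{\alpha }(i+1)=k,\;B_{\alpha }(i)<k\}.
\end{equation*}
Because $B_{\alpha}(i)\le B_{\alpha}(i+1)$, the constraint $B_{\alpha}(i)<k$ automatically excludes the paths that overshoot $k$ at the jump (the source of $\Pr\{T_{k}^{\mathcal{Z}}<\infty\}<1$), and it leaves no atom at $t=0$.

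The first factor is immediate: since $\{\tau_{i}\le t\}=\{B_{\lambda}(t)\ge i+1\}$ and summing the geometric law of $B_{\lambda}(t)$ gives $\Pr\{B_{\lambda}(t)\ge i+1\}=(1-e^{-\lambda t})^{i}$, differentiation yields
\begin{equation*}
\Pr \{\tau _{i}\in dt\}/dt=\lambda \,i\,e^{-\lambda t}(1-e^{-\lambda t})^{i-1},
\end{equation*}
which already reproduces the factor $\lambda\,j\,e^{-\lambda t}(1-e^{-\lambda t})^{j-1}$ appearing in (\ref{bb}).

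The core of the proof is the second factor. I would condition on $B_{\alpha}(i)=m$ and insert the one-step (negative binomial) transition law of the Yule process, $\Pr\{B_{\alpha}(i+1)=k\mid B_{\alpha}(i)=m\}=\binom{k-1}{m-1}e^{-\alpha m}(1-e^{-\alpha})^{k-m}$, together with $\Pr\{B_{\alpha}(i)=m\}=e^{-\alpha i}(1-e^{-\alpha i})^{m-1}$, to obtain
\begin{equation*}
\Pr \{B_{\alpha }(i+1)=k,\;B_{\alpha }(i)<k\}=e^{-\alpha (i+1)}\sum_{l=0}^{k-2}\binom{k-1}{l}\bigl[e^{-\alpha }(1-e^{-\alpha i})\bigr]^{l}(1-e^{-\alpha })^{k-1-l}.
\end{equation*}
The full binomial sum over $l=0,\dots ,k-1$ collapses, by the binomial theorem, to $\bigl(e^{-\alpha}(1-e^{-\alpha i})+(1-e^{-\alpha})\bigr)^{k-1}=(1-e^{-\alpha(i+1)})^{k-1}$; subtracting the missing top term $l=k-1$, namely $\bigl(e^{-\alpha}(1-e^{-\alpha i})\bigr)^{k-1}=(e^{-\alpha}-e^{-\alpha(i+1)})^{k-1}$, leaves the closed form $e^{-\alpha(i+1)}\bigl[(1-e^{-\alpha(i+1)})^{k-1}-(e^{-\alpha}-e^{-\alpha(i+1)})^{k-1}\bigr]$.

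Finally I would multiply the two factors, relabel $i=j$, and rearrange. Distributing a factor $e^{-\alpha(k-1)}$ inside the two $(k-1)$-th powers converts $e^{-\alpha(j+1)}(1-e^{-\alpha(j+1)})^{k-1}$ into $e^{-\alpha k}e^{-\alpha j}(e^{\alpha}-e^{-\alpha j})^{k-1}$ and $e^{-\alpha(j+1)}(e^{-\alpha}-e^{-\alpha(j+1)})^{k-1}$ into $e^{-\alpha k}e^{-\alpha j}(1-e^{-\alpha j})^{k-1}$; summing the resulting summand over $j$ reproduces (\ref{bb}) verbatim. The only genuine obstacle is the binomial collapse in the previous step: once the incomplete sum is recognized as a full binomial expansion minus its top term, everything else is bookkeeping.
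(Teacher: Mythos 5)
Your argument is correct and is essentially the paper's proof in different clothing: the factor $\lambda\, j\, e^{-\lambda t}(1-e^{-\lambda t})^{j-1}\,dt$ that you obtain as the density of the $j$-th jump time of $B_{\lambda}$ is exactly the paper's $\lambda j\,dt\,\Pr\{B_{\lambda}(t)=j\}$, and your sum over the pre-jump value $m=B_{\alpha}(i)<k$ with the one-step negative binomial law is the paper's sum over $h=k-m$. The only difference is cosmetic: you carry out the binomial collapse explicitly to get the closed bracket, whereas the paper leaves the inner sum $\sum_{h=1}^{k-1}\binom{k-1}{h}(e^{\alpha }-1)^{h}(1-e^{-\alpha j})^{k-h-1}$ uncollapsed and identifies it with the bracket in (\ref{bb}).
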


\begin{proof}
By a conditioning argument we get \
\begin{eqnarray}
\Pr \{T_{k}^{\mathcal{Z}}\in dt\}&=&\mathbb{E}\Pr \{T_{k}^{\mathcal{Z}}\in
dt|B_{\alpha }(1)\}  \notag \\
&=&\sum_{h=1}^{k-1}\Pr \{B_{\alpha }(B_{\lambda }(t))=k-h,B_{\alpha
}(B_{\lambda }(t+dt))=k\}  \notag \\
&=&\lambda dt\sum_{j=1}^{\infty }j\sum_{h=1}^{k-1}\Pr \{\left. B_{\alpha
}(B_{\lambda }(t))=k-h,B_{\alpha }(B_{\lambda }(t)+1)=k\right\vert
B_{\lambda }(t)=j\}\Pr \{B_{\lambda }(t)=j\}  \notag \\
&=&\lambda dt\sum_{j=1}^{\infty }j\Pr \{B_{\lambda
}(t)=j\}\sum_{h=1}^{k-1}\Pr \{\left. B_{\alpha }(j+1)=k\right\vert B_{\alpha
}(j)=k-h\}\Pr \{B_{\alpha }(j)=k-h\}  \notag \\
&=&\lambda e^{-\lambda t-\alpha k}dt\sum_{j=1}^{\infty }je^{-\alpha
j}(1-e^{-\lambda t})^{j-1}\sum_{h=1}^{k-1}\binom{k-1}{h}(e^{\alpha
}-1)^{h}(1-e^{-\alpha j})^{k-h-1},  \notag
\end{eqnarray}%
which coincides with (\ref{bb}).
\end{proof}

We note that, in the special case $k=2$, we get%
\begin{equation*}
\Pr \{T_{2}^{\mathcal{Z}}\in dt\}=\frac{\lambda e^{-\lambda t-2\alpha
}(1-e^{-\alpha })}{\left[ 1-e^{-\alpha }(1-e^{-\lambda t})\right] ^{2}}%
dt=\lambda e^{\lambda t}(1-e^{-\alpha })[q_{1}^{\mathcal{Z}}(t)]^{2}dt
\end{equation*}

\begin{remark}
By integrating (\ref{bb}), we get that%
\begin{equation}
\Pr \{T_{k}^{\mathcal{Z}}<\infty \}=e^{-\alpha k}\sum_{j=1}^{\infty
}e^{-\alpha j}\left[ (e^{\alpha }-e^{-\alpha j})^{k-1}-(1-e^{-\alpha
j})^{k-1}\right] ,  \label{cc}
\end{equation}%
which can be alternatively written as%
\begin{eqnarray}
\Pr \{T_{k}^{\mathcal{Z}}<\infty \} &=&\sum_{j=2}^{\infty }e^{-\alpha
j}(1-e^{-\alpha j})^{k-1}-e^{-\alpha k}\sum_{j=1}^{\infty }e^{-\alpha
j}(1-e^{-\alpha j})^{k-1}  \notag \\
&=&(1-e^{-\alpha k})\sum_{j=2}^{\infty }e^{-\alpha j}(1-e^{-\alpha
j})^{k-1}-e^{-\alpha k-\alpha }(1-e^{-\alpha })^{k-1}  \notag \\
&=&(1-e^{-\alpha k})\sum_{j=2}^{\infty }\Pr \{B_{\alpha }(j)=k\}-e^{-\alpha
k}\Pr \{B_{\alpha }(1)=k\}.  \notag \\
&&  \label{pre}
\end{eqnarray}%
In order to analyze some special cases, we supply also the following finite
sum form of the hitting probabilities given in (\ref{cc}):%
\begin{eqnarray}
\Pr \{T_{k}^{\mathcal{Z}}<\infty \} &=&e^{-\alpha k}\sum_{j=1}^{\infty
}e^{-\alpha j}\sum_{r=0}^{k-1}\binom{k-1}{r}\left[ (-e^{-\alpha
j})^{r}e^{\alpha (k-r-1)}-(-e^{-\alpha j})^{r}\right]  \notag \\
&=&e^{-\alpha k}\sum_{r=0}^{k-2}\binom{k-1}{r}(-1)^{r}\frac{e^{-\alpha (r+1)}%
}{1-e^{-\alpha (r+1)}}\left( e^{\alpha (k-r-1)}-1\right) .  \label{ic}
\end{eqnarray}%
It is now easy to show that the following relationships hold, for $k=2,3,4$:%
\begin{equation*}
\Pr \{T_{4}^{\mathcal{Z}}<\infty \}<\Pr \{T_{2}^{\mathcal{Z}}<\infty \}<\Pr
\{T_{3}^{\mathcal{Z}}<\infty \},
\end{equation*}%
since we have that%
\begin{eqnarray*}
\Pr \{T_{2}^{\mathcal{Z}}<\infty \} &=&e^{-2\alpha } \\
\Pr \{T_{3}^{\mathcal{Z}}<\infty \} &=&e^{-2\alpha }\left[ 1+e^{-\alpha }%
\frac{1-e^{-\alpha }}{1+e^{-\alpha }}\right] \\
\Pr \{T_{4}^{\mathcal{Z}}<\infty \} &=&e^{-2\alpha }\left[ 1-e^{-\alpha }%
\frac{(1-e^{-\alpha })(1+e^{-3\alpha })}{1+e^{-\alpha }+e^{-2\alpha }}\right]
.
\end{eqnarray*}%
The following figures (produced by the software R) describe the behavior of
the probabilities (\ref{ic}), for different values of $\alpha .$ We remark
that the scales are not the same in different figures.
\end{remark}

\begin{figure}[htp!]
\caption{Hitting times $\Pr \{T_{k}^{\mathcal{Z}}<\infty \}$}
\label{label}\centering
\par
\includegraphics[scale=0.30]{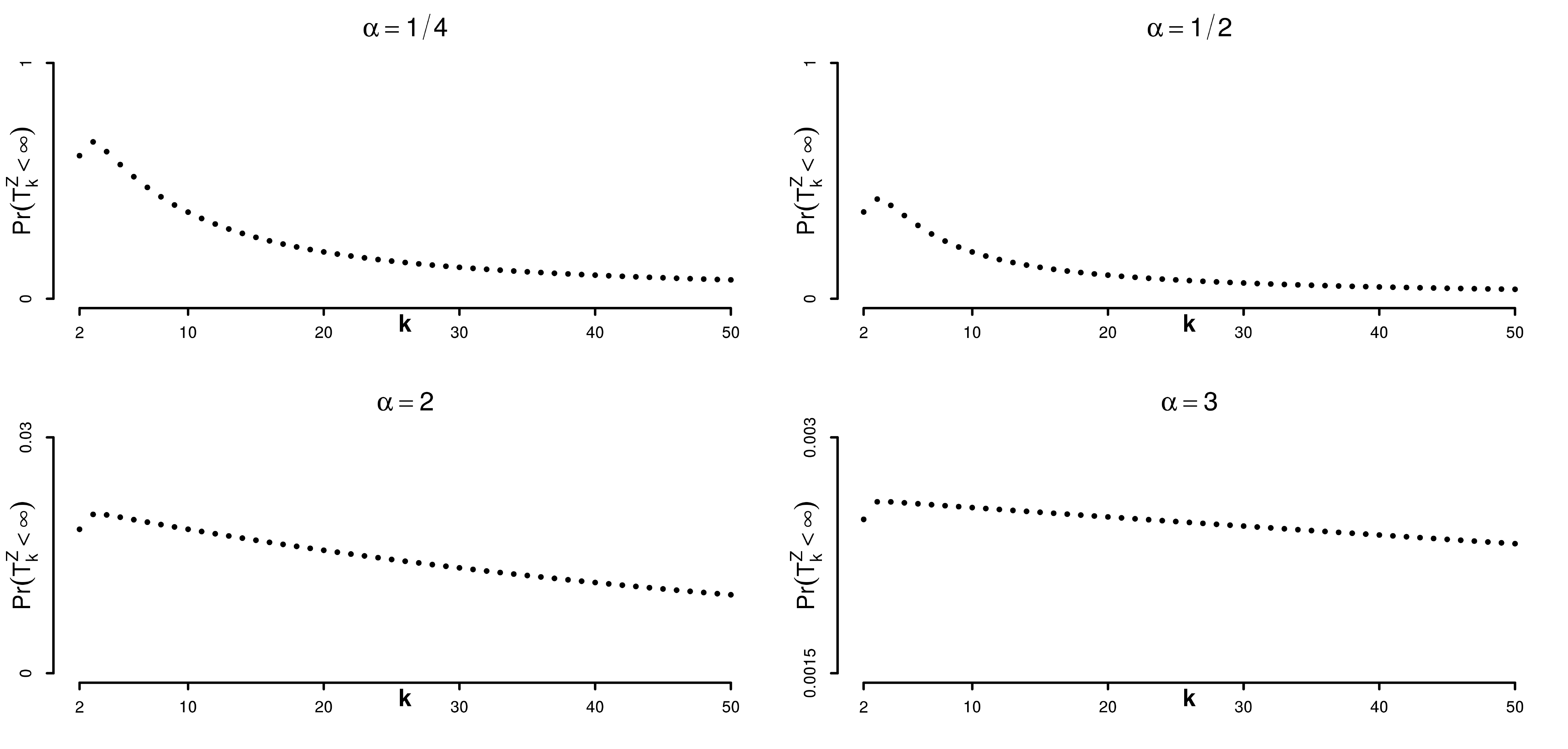}
\end{figure}

The figures above show that the probabilities $\Pr \{T_{k}^{\mathcal{Z}%
}<\infty \}$, for $k>3$, are decreasing functions which, for $0<\alpha <1,$
vanish more rapidly than for $\alpha >1.$

The rate of decreasing is slow, for all $\alpha ,$ since, from (\ref{pre}),
we get that%
\begin{eqnarray*}
\sum_{k=2}^{\infty }\Pr \{T_{k}^{\mathcal{Z}}<\infty \}
&=&\sum_{j=2}^{\infty }e^{-\alpha j}\sum_{k=2}^{\infty }(1-e^{-\alpha
j})^{k-1}-\sum_{j=1}^{\infty }e^{-\alpha (j+1)}\sum_{k=2}^{\infty
}(e^{-\alpha }-e^{-\alpha (j+1)})^{k-1} \\
&=&\sum_{j=2}^{\infty }(1-e^{-\alpha j})-\sum_{j=1}^{\infty }e^{-\alpha
(j+1)-\alpha }\frac{1-e^{-\alpha j}}{1-e^{-\alpha }+e^{-\alpha (j+1)}} \\
&=&\sum_{j=2}^{\infty }\frac{(1-e^{-\alpha j})[1-e^{-\alpha }-e^{-\alpha
(j+1)}-e^{-\alpha (j+1)-\alpha }]}{1-e^{-\alpha }+e^{-\alpha (j+1)}}-\frac{%
e^{-3\alpha }(1-e^{-\alpha })}{1-e^{-\alpha }+e^{-2\alpha }} \\
&=&(1-e^{-\alpha })\left\{ \sum_{j=2}^{\infty }\frac{(1-e^{-\alpha
j})[1+e^{-\alpha (j+1)}]}{1-e^{-\alpha }+e^{-\alpha (j+1)}}-\frac{%
e^{-3\alpha }}{1-e^{-\alpha }+e^{-2\alpha }}\right\} =\infty ,
\end{eqnarray*}%
since%
\begin{equation*}
\lim_{j\rightarrow \infty }\frac{(1-e^{-\alpha j})[1+e^{-\alpha (j+1)}]}{%
1-e^{-\alpha }+e^{-\alpha (j+1)}}=\frac{1}{1-e^{-\alpha }}.
\end{equation*}

\section{Linear birth process at Poisson times}

We now pass to the analysis of the process defined as
\begin{equation*}
\mathcal{X}(t):=B_{\alpha }(N_{\lambda }(t)),\qquad t\geq 0,
\end{equation*}%
obtained as a composition of a linear birth process $B_{\alpha }$ with an
independent homogeneous Poisson process $N_{\lambda }(t),t\geq 0,$ with rate
$\lambda >0.$ The process $\mathcal{X}$ can be regarded as a linear birth
process randomly sampled at Poisson times. We first note that, for $k\geq
n_{0},$\
\begin{eqnarray}
q_{k,n_{0}}^{\mathcal{X}}(t)&:=&\Pr \{\left. \mathcal{X}(t)=k\right\vert
\mathcal{X}(0)=n_{0}\}  \label{si} \\
&=&e^{-\lambda t}\dsum\limits_{j=0}^{\infty }\binom{k-1}{k-n_{0}}e^{-\alpha
jn_{0}}\left( 1-e^{-\alpha j}\right) ^{k-n_{0}}\frac{(\lambda t)^{j}}{j!}
\notag \\
&=&\sum_{l=0}^{k-n_{0}}\binom{k-n_{0}}{l}(-1)^{l}\exp \left\{ \lambda
t[1-e^{-\alpha (n_{0}+l)}]\right\} .  \notag
\end{eqnarray}%
In particular, for $k=n_{0}$, we get%
\begin{equation}
q_{n_{0},n_{0}}^{\mathcal{X}}(t)=e^{-\lambda t(1-e^{-\alpha n_{0}})}
\label{st}
\end{equation}%
and thus the waiting time\ for the appearance of the first offspring is
exponentially distributed with parameter $\lambda (1-e^{-\alpha n_{0}}).$ It
is easy to show that
\begin{equation}
\mathbb{E}\mathcal{X}(t)=n_{0}e^{\lambda t(e^{\alpha }-1)}.  \label{bu}
\end{equation}%
The variance of $\mathcal{X}$ can be obtained as follows
\begin{eqnarray}
Var\mathcal{X}(t) &=&Var\mathbb{E}\left[ \left. B_{\alpha }(N_{\lambda
}(t))\right\vert N_{\lambda }(t)\right] +\mathbb{E}Var\left[ \left.
B_{\alpha }(N_{\lambda }(t))\right\vert N_{\lambda }(t)\right]  \label{he} \\
&=&Var\left[ n_{0}e^{\alpha N_{\lambda }(t)}\right] +\mathbb{E}\left[
n_{0}e^{2\alpha N_{\lambda }(t)}-n_{0}e^{\alpha N_{\lambda }(t)}\right]
\notag \\
&=&n_{0}(n_{0}+1)e^{\lambda t(e^{2\alpha }-1)}-n_{0}^{2}e^{\lambda
t(e^{\alpha }-1)}-n_{0}e^{2\lambda t(e^{\alpha }-1)}.  \notag
\end{eqnarray}%
For the sake of simplicity, from now on, we assume $n_{0}=1$ and denote for
brevity $q_{k,1}^{\mathcal{X}}$ as $q_{k}^{\mathcal{X}}$. The factorial
moments of $\mathcal{X}$ can be evaluated as follows:
\begin{eqnarray}
&&\mathbb{E}\left\{ \left. \mathcal{X}(t)\left[ \mathcal{X}(t)-1\right] ...[%
\mathcal{X}(t)-r+1]\right\vert \mathcal{X}(t)=1\right\}  \label{ci} \\
&=&\sum_{k=r}^{\infty }k(k-1)...(k-r+1)q_{k}^{\mathcal{X}}(t)  \notag \\
&=&[\text{by (\ref{si})]}  \notag \\
&=&r!e^{-\lambda t}\sum_{k=r}^{\infty }\binom{k}{r}\sum_{j=0}^{\infty
}e^{-\alpha j}\left( 1-e^{-\alpha j}\right) ^{k-1}\frac{(\lambda t)^{j}}{j!}
\notag \\
&=&r!e^{-\lambda t}\sum_{j=0}^{\infty }e^{-\alpha j}\frac{(\lambda t)^{j}}{j!%
}\sum_{l=0}^{\infty }\binom{r+l}{l}\left( 1-e^{-\alpha j}\right) ^{r+l-1}
\notag \\
&=&r!e^{-\lambda t}\sum_{j=0}^{\infty }e^{-\alpha j}\frac{(\lambda t)^{j}}{j!%
}\frac{\left( 1-e^{-\alpha j}\right) ^{r-1}}{[1-(1-e^{-\alpha j})]^{r+1}}
\notag
\end{eqnarray}
\begin{eqnarray}
&=&r!e^{-\lambda t}\sum_{j=0}^{\infty }\frac{(\lambda te^{\alpha r})^{j}}{j!}%
\left( 1-e^{-\alpha j}\right) ^{r-1}  \notag \\
&=&r!e^{-\lambda t}\sum_{j=0}^{\infty }\frac{(\lambda te^{\alpha r})^{j}}{j!}%
\sum_{m=0}^{r-1}\binom{r-1}{m}(-1)^{m}e^{-\alpha jm}  \notag \\
&=&r!\sum_{m=0}^{r-1}\binom{r-1}{m}(-1)^{m}\exp \left\{ -\lambda
t(1-e^{\alpha (r-m)})\right\} .  \notag
\end{eqnarray}%
By using result (\ref{ci}) we can check formulas (\ref{bu}) and (\ref{he}),
by means of independent calculations.

Our main interest here is to study the distribution of the first-passage
time for an arbitrary level $k$, i.e.%
\begin{equation*}
T_{k}^{\mathcal{X}}=\inf \{t>0:\mathcal{X}(t)=k\},
\end{equation*}%
for $k\geq 2.$ In the next theorem we derive the following explicit
distribution of $T_{k}^{\mathcal{X}}.$

\begin{theorem}
For $t\geq 0$ and $k\geq 2$ we have that
\begin{equation}
\Pr \{\left. T_{k}^{\mathcal{X}}\in dt\right\vert \mathcal{X}%
(0)=1\}/dt=\lambda e^{-\alpha k}\sum_{l=0}^{k-2}(-1)^{l}\binom{k-1}{l}%
(e^{\alpha (k-1-l)}-1)\exp \{-\lambda t[1-e^{-\alpha (l+1)}]\}  \label{two}
\end{equation}
\end{theorem}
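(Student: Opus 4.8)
The plan is to mirror the conditioning argument used for the iterated birth process $T_k^{\mathcal{Z}}$, exploiting the fact that $\mathcal{X}(t)=B_\alpha(N_\lambda(t))$ and that the outer Poisson clock increases by one unit at rate $\lambda$. The event $\{T_k^{\mathcal{X}}\in dt\}$ requires that the process jumps \emph{onto} level $k$ exactly at time $t$ without having hit $k$ before. Since the underlying process $B_\alpha(N_\lambda(t))$ can only change value when $N_\lambda$ increments (in an interval $dt$ this happens with probability $\lambda\,dt$), I would write
\begin{equation*}
\Pr\{T_k^{\mathcal{X}}\in dt\}=\lambda\,dt\sum_{h=1}^{k-1}\Pr\{B_\alpha(N_\lambda(t))=k-h,\ B_\alpha(N_\lambda(t)+1)=k\},
\end{equation*}
conditioning on the size $k-h$ just before the Poisson event and on the additional births produced by the inner birth process as $N_\lambda$ advances by one.

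Next I would condition on $N_\lambda(t)=j$, so that the state before the jump is $B_\alpha(j)=k-h$ and after the jump is $B_\alpha(j+1)=k$. Using the transition law of the linear birth (Yule--Furry) process, $\Pr\{B_\alpha(j)=m\}=e^{-\alpha j}(1-e^{-\alpha j})^{m-1}$ and the one-step transition $\Pr\{B_\alpha(j+1)=k\mid B_\alpha(j)=k-h\}=\binom{k-1}{h}e^{-\alpha(k-h)}(1-e^{-\alpha})^{h}$ (the probability that the $k-h$ current individuals produce $h$ further offspring in one additional unit of inner time), I would collect the factors. Summing the Poisson weights $\Pr\{N_\lambda(t)=j\}=e^{-\lambda t}(\lambda t)^j/j!$ over $j$ and the binomial terms over $h$ produces a double sum; the inner sum over $h$ from $1$ to $k-1$ can be recognized as a binomial expansion, namely $\sum_{h=1}^{k-1}\binom{k-1}{h}(e^{\alpha}-1)^h(1-e^{-\alpha j})^{k-1-h}=(e^\alpha-e^{-\alpha j})^{k-1}-(1-e^{-\alpha j})^{k-1}$, paralleling the manipulation in the proof of the first theorem.

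At this stage the expression matches the structure of $(\ref{bb})$ but with a Poisson time-change, so the remaining work is to interchange the two sums and evaluate the Poisson series in closed form. Writing the bracket $(e^\alpha-e^{-\alpha j})^{k-1}-(1-e^{-\alpha j})^{k-1}$ via the finite binomial expansion as in $(\ref{ic})$, each resulting power $e^{-\alpha j r}$ couples with $e^{-\alpha j}$ and the Poisson sum $\sum_j (\lambda t\, e^{-\alpha(r+1)})^j/j!=\exp\{\lambda t\, e^{-\alpha(r+1)}\}$, which after including the prefactor $e^{-\lambda t}$ yields the exponential $\exp\{-\lambda t[1-e^{-\alpha(l+1)}]\}$ appearing in $(\ref{two})$. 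I expect the main obstacle to be purely bookkeeping: carefully reindexing the double binomial sum (the combinatorial identity $\binom{k-1}{h}\binom{k-1-h}{r}$-type regrouping) and tracking signs so that the factor $(e^{\alpha(k-1-l)}-1)$ emerges cleanly, rather than any genuine analytic difficulty. Since the inner step-transition and the Poisson summation are both explicit, the proof reduces to confirming that these rearrangements reproduce exactly the claimed finite-sum form.
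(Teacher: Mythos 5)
Your proposal is correct and follows essentially the same route as the paper: the same decomposition over the pre-jump state $k-h$, conditioning on $N_\lambda(t)=j$, the same Yule one-step transition law, and the same Poisson-series evaluation. The only (immaterial) difference is the order of the final algebra — you sum over $h$ first to get the bracket $(e^{\alpha}-e^{-\alpha j})^{k-1}-(1-e^{-\alpha j})^{k-1}$ and then expand it binomially, whereas the paper expands first and interchanges the double binomial sum; both yield (\ref{two}).
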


\begin{proof}
We start by considering that%
\begin{eqnarray}
&&\Pr \{\left. T_{k}^{\mathcal{X}}\in dt\right\vert \mathcal{X}(0)=1\}
\notag \\
&=&\sum_{h=1}^{k-1}\Pr \{B_{\alpha }(N_{\lambda }(t))=k-h,B_{\alpha
}(N_{\lambda }(t+dt))=k\}  \notag \\
&=&\sum_{h=1}^{k-1}\Pr \{B_{\alpha }(N_{\lambda }(t))=k-h,B_{\alpha
}(N_{\lambda }(t)+dN_{\lambda }(t))=k\}  \notag \\
&=&\lambda dt\sum_{j=0}^{\infty }\sum_{h=1}^{k-1}\Pr \{\left. B_{\alpha
}(N_{\lambda }(t))=k-h,B_{\alpha }(N_{\lambda }(t)+1)=k\right\vert
N_{\lambda }(t)=j\}\Pr \{N_{\lambda }(t)=j\}  \notag \\
&=&\lambda e^{-\lambda t}dt\sum_{j=0}^{\infty }\frac{(\lambda t)^{j}}{j!}%
\sum_{h=1}^{k-1}\Pr \{B_{\alpha }(j)=k-h,B_{\alpha }(j+1)=k\}  \notag \\
&=&\lambda e^{-\lambda t}dt\sum_{j=0}^{\infty }\frac{(\lambda t)^{j}}{j!}%
\sum_{h=1}^{k-1}e^{-\alpha j}(1-e^{-\alpha j})^{k-h-1}\binom{k-1}{h}%
e^{-\alpha (k-h)}(1-e^{-\alpha })^{h}  \notag \\
&=&\lambda e^{-\lambda t-\alpha k}dt\sum_{h=1}^{k-1}\binom{k-1}{h}(e^{\alpha
}-1)^{h}\sum_{l=0}^{k-h-1}\binom{k-h-1}{l}(-1)^{l}\sum_{j=0}^{\infty }\frac{%
(\lambda te^{-\alpha (l+1)})^{j}}{j!}  \notag \\
&=&\lambda e^{-\lambda t-\alpha k}dt\sum_{h=1}^{k-1}\binom{k-1}{h}(e^{\alpha
}-1)^{h}\sum_{l=0}^{k-h-1}\binom{k-h-1}{l}(-1)^{l}\exp \{\lambda te^{-\alpha
(l+1)}\}  \notag \\
&=&\lambda e^{-\lambda t-\alpha k}dt\sum_{l=0}^{k-2}(-1)^{l}\exp \{\lambda
te^{-\alpha (l+1)}\}\sum_{h=1}^{k-l-1}\binom{k-1}{h}\binom{k-h-1}{l}%
(e^{\alpha }-1)^{h}  \notag \\
&=&\lambda e^{-\alpha k}dt\sum_{l=0}^{k-2}\binom{k-1}{l}(-1)^{l}\exp
\{-\lambda t(1-e^{-\alpha (l+1)})\}\sum_{h=1}^{k-l-1}\binom{k-l-1}{h}%
(e^{\alpha }-1)^{h},  \notag
\end{eqnarray}%
which coincides with (\ref{two}).
\end{proof}

\begin{remark}
In the particular case $k=2,$ formula (\ref{two}) reduces to%
\begin{eqnarray}
\Pr \{\left. T_{2}^{\mathcal{X}}\in dt\right\vert \mathcal{X}(0)=1\}/dt
&=&\lambda e^{-\alpha }(1-e^{-\alpha })e^{-\lambda t(1-e^{-\alpha })}  \notag
\\
&=&e^{-\alpha }\Pr \{Z\in dt\},  \notag
\end{eqnarray}%
where $Z$ is an exponential r.v. with parameter $\lambda (1-e^{-\alpha })$
and this clearly shows that for the first-passage time through the level $%
k=2,$ the following result holds%
\begin{equation}
\Pr \{\left. T_{2}^{\mathcal{X}}<\infty \right\vert \mathcal{X}%
(0)=1\}=e^{-\alpha }<1.  \label{rel}
\end{equation}
\end{remark}

\begin{remark}
For any $k\geq 2$, by integrating formula (\ref{two}), we get%
\begin{eqnarray}
&&\Pr \{\left. T_{k}^{\mathcal{X}}<\infty \right\vert \mathcal{X}(0)=1\}
\label{for} \\
&=&e^{-\alpha k}\sum_{l=0}^{k-2}(-1)^{l}\binom{k-1}{l}\frac{e^{\alpha
(k-1-l)}-1}{1-e^{-\alpha (l+1)}}  \notag \\
&=&e^{-\alpha k}\sum_{l=0}^{k-2}(-1)^{l}\binom{k-1}{l}[e^{\alpha
(k-1-l)}-1]\sum_{m=0}^{\infty }e^{-\alpha (l+1)m}  \notag \\
&=&e^{-\alpha k}\sum_{m=0}^{\infty }e^{-\alpha m}\sum_{l=0}^{k-2}(-1)^{l}%
\binom{k-1}{l}[e^{\alpha (k-1)-\alpha l(m+1)}-e^{-\alpha lm}]  \notag \\
&=&\sum_{m=0}^{\infty }e^{-\alpha (m+1)}\left[ (1-e^{-\alpha
(m+1)})^{k-1}-(-1)^{k-1}e^{-\alpha (m+1)(k-1)}\right] +  \notag \\
&&-e^{-\alpha k}\sum_{m=0}^{\infty }e^{-\alpha m}\left[ (1-e^{-\alpha
m})^{k-1}-(-1)^{k-1}e^{-\alpha m(k-1)}\right]   \notag \\
&=&\sum_{m=0}^{\infty }\left[ e^{-\alpha (m+1)}(1-e^{-\alpha
(m+1)})^{k-1}-e^{-\alpha k}e^{-\alpha m}(1-e^{-\alpha m})^{k-1}\right]
\notag \\
&=&(1-e^{-\alpha k})\sum_{m=1}^{\infty }e^{-\alpha m}(1-e^{-\alpha m})^{k-1}
\notag \\
&=&(1-e^{-\alpha k})\sum_{m=1}^{\infty }\Pr \{B_{\alpha }(m)=k\}.  \notag
\end{eqnarray}%
As in the case of the iterated birth process, the hitting probabilities (\ref%
{for}) are not affected by the parameter $\lambda $ of the inner process ($%
N_{\lambda }$, in this case)$.$ Moreover, we remark that the relationship
given in the last line is equal to the analogous one presented in (\ref{pre}%
) (except for the additional term in (\ref{pre}), which is due to the
different starting point of the birth process with respect to the Poisson
one):%
\begin{equation*}
\Pr \{\left. T_{k}^{\mathcal{X}}<\infty \right\vert \mathcal{X}(0)=1\}-\Pr
\{T_{k}^{\mathcal{Z}}<\infty \}=\Pr \{B_{\alpha }(1)=k\},
\end{equation*}%
for any $k\geq 2.$ Thus, for the iterated linear birth process, the
probability of reaching any level $k$ in a finite time is strictly smaller
than the corresponding probability for $B_{\alpha }(N_{\lambda }(t)),t>0$.
This difference decreases monotonically with $k.$
\end{remark}

\begin{remark}
For $k=2$ the first line of formula (\ref{for}) reduces to (\ref{rel})$.$ In
the special case $k=3$ we have that%
\begin{eqnarray*}
&&\Pr \{\left. T_{3}^{\mathcal{X}}<\infty \right\vert \mathcal{X}(0)=1\} \\
&=&e^{-3\alpha }\frac{e^{\alpha }-1}{1-e^{-\alpha }}\left[ e^{\alpha }+1-%
\frac{2}{1+e^{-\alpha }}\right] \\
&=&e^{-\alpha }\frac{1+e^{-2\alpha }}{1+e^{-\alpha }} \\
&=&\Pr \{\left. T_{2}^{\mathcal{X}}<\infty \right\vert \mathcal{X}(0)=1\}%
\frac{1+e^{-2\alpha }}{1+e^{-\alpha }}<\Pr \{\left. T_{2}^{\mathcal{X}%
}<\infty \right\vert \mathcal{X}(0)=1\}<1.
\end{eqnarray*}%
We show now that, for any $k\geq 2$ the distribution of the first-passage
time for the level $k$ is monotonically decreasing: we present the following
heuristic argument
\begin{eqnarray*}
&&\Pr \{\left. T_{k}^{\mathcal{X}}<\infty \right\vert \mathcal{X}(0)=1\}-\Pr
\{\left. T_{k-1}^{\mathcal{X}}<\infty \right\vert \mathcal{X}(0)=1\} \\
&=&(1-e^{-\alpha k})\sum_{m=0}^{\infty }e^{-\alpha m}(1-e^{-\alpha
m})^{k-1}-(1-e^{-\alpha (k-1)})\sum_{m=0}^{\infty }e^{-\alpha
m}(1-e^{-\alpha m})^{k-2} \\
&\simeq &(1-e^{-\alpha k})\int_{0}^{+\infty }e^{-\alpha x}(1-e^{-\alpha
x})^{k-1}dx-(1-e^{-\alpha (k-1)})\int_{0}^{+\infty }e^{-\alpha
x}(1-e^{-\alpha x})^{k-2}dx \\
&=&\frac{1-e^{-\alpha k}}{\alpha k}-\frac{1-e^{-\alpha (k-1)}}{\alpha (k-1)}
\\
&=&\frac{1-e^{-\alpha }}{\alpha }\left[ \frac{1+...+e^{-\alpha (k-1)}}{k}-%
\frac{1+...+e^{-\alpha (k-2)}}{k-1}\right] \\
&=&\frac{(1-e^{-\alpha })}{\alpha }\left\{ \left( 1+...+e^{-\alpha
(k-2)}\right) \left[ \frac{1}{k}-\frac{1}{k-1}\right] +\frac{e^{-\alpha
(k-1)}}{k}\right\} \\
&\leq &\frac{(1-e^{-\alpha })}{\alpha }\left[ -\frac{e^{-\alpha (k-2)}}{%
k(k-1)}(k-1)+\frac{e^{-\alpha (k-1)}}{k}\right] <0,
\end{eqnarray*}%
for $k>2.$
\end{remark}

The plots of Fig.2 confirm the decreasing structure of the probabilities $%
\Pr \{\left. T_{k}^{\mathcal{X}}<\infty \right\vert \mathcal{X}(0)=1\}.$

\begin{figure}[htp!]
\caption{Hitting times $\Pr \{T_{k}^{\mathcal{X}}<\infty \}$}
\label{label2}\centering
\par
\includegraphics[scale=0.30]{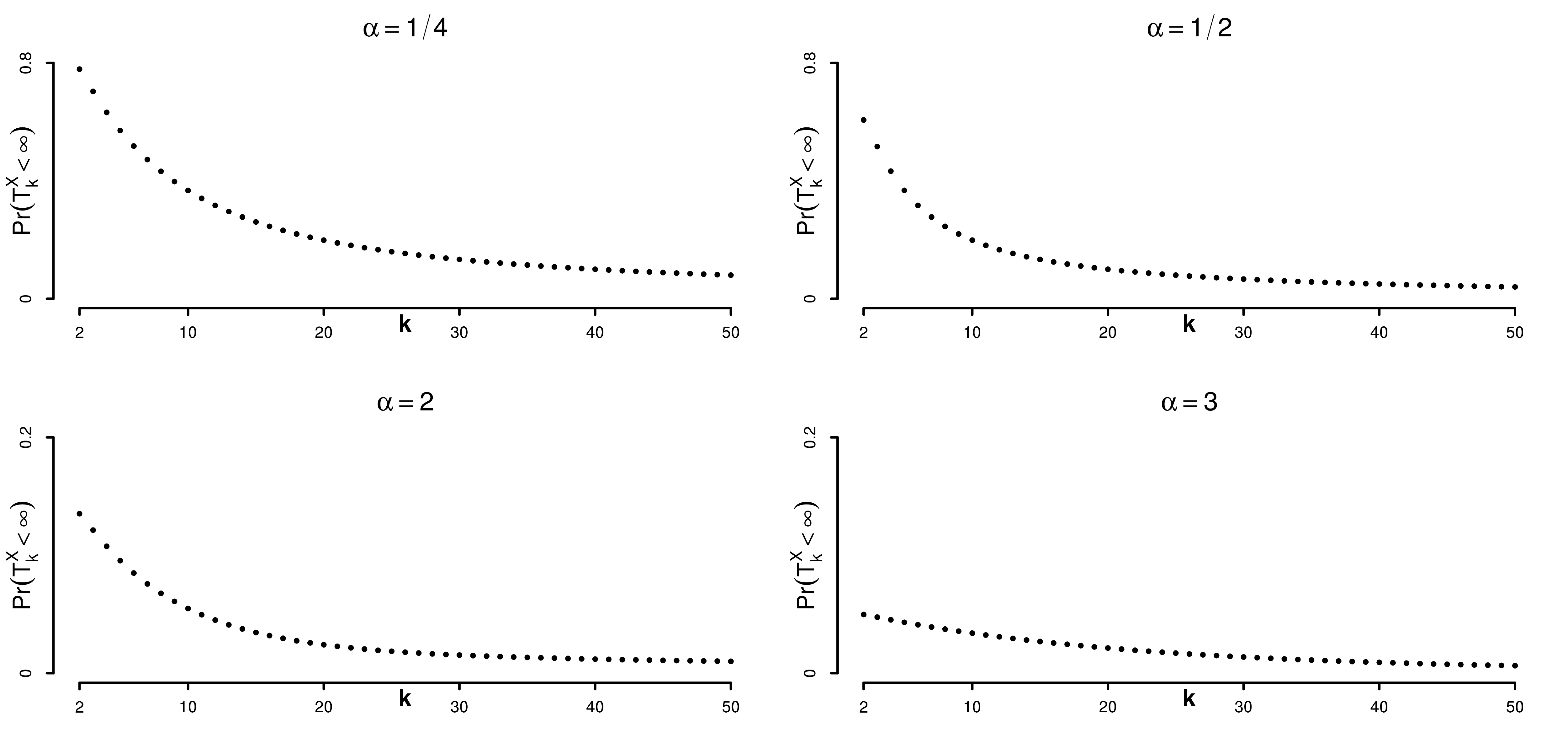}
\end{figure}

\begin{remark}
From formula (\ref{for}) we derive the following equality%
\begin{eqnarray}
g_{k}^{\mathcal{X}}-g_{k-1}^{\mathcal{X}}&:=&\frac{\Pr \{\left. T_{k}^{%
\mathcal{X}}<\infty \right\vert \mathcal{X}(0)=1\}}{1-e^{-\alpha k}}-\frac{%
\Pr \{\left. T_{k-1}^{\mathcal{X}}<\infty \right\vert \mathcal{X}(0)=1\}}{%
1-e^{-\alpha (k-1)}}  \notag \\
&=&-\sum_{m=1}^{\infty }e^{-2\alpha m}(1-e^{-\alpha m})^{k-2}<0.  \label{io}
\end{eqnarray}%
It is clear from (\ref{io}) that, for large values of $k$, we get%
\begin{equation*}
\Pr \{\left. T_{k}^{\mathcal{X}}<\infty \right\vert \mathcal{X}%
(0)=1\}\approx \Pr \{\left. T_{k-1}^{\mathcal{X}}<\infty \right\vert
\mathcal{X}(0)=1\}
\end{equation*}%
and thus the hitting probabilities slowly change with $k$ as Fig. 2 confirms.

By applying formula (\ref{io}) we can prove also the following relationship%
\begin{eqnarray}
&&\frac{\Pr \{\left. T_{k}^{\mathcal{X}}<\infty \right\vert \mathcal{X}%
(0)=1\}}{1-e^{-\alpha k}}-\frac{\Pr \{\left. T_{k-2}^{\mathcal{X}}<\infty
\right\vert \mathcal{X}(0)=1\}}{1-e^{-\alpha (k-2)}}  \label{in} \\
&=&-\sum_{m=1}^{\infty }e^{-2\alpha m}(1-e^{-\alpha m})^{k-3}\left(
2-e^{-\alpha m}\right)  \notag \\
&=&2\left[ \frac{\Pr \{\left. T_{k-1}^{\mathcal{X}}<\infty \right\vert
\mathcal{X}(0)=1\}}{1-e^{-\alpha (k-1)}}-\frac{\Pr \{\left. T_{k-2}^{%
\mathcal{X}}<\infty \right\vert \mathcal{X}(0)=1\}}{1-e^{-\alpha (k-2)}}%
\right] +\sum_{m=1}^{\infty }e^{-3\alpha m}(1-e^{-\alpha m})^{k-3}.  \notag
\end{eqnarray}%
Formula (\ref{in}) shows that, for large values of $k$,
\begin{equation*}
g_{k}^{\mathcal{X}}-g_{k-2}^{\mathcal{X}}\simeq 2(g_{k-1}^{\mathcal{X}%
}-g_{k-2}^{\mathcal{X}}).
\end{equation*}
\end{remark}

\section{Death processes at Poisson times}

We here consider linear and sublinear death processes at Poisson times. The
linear death process, with initial population of $n_{0}$ individuals and
death rate $\mu >0$, has a binomial distribution, i.e.%
\begin{equation}
p_{k}^{D}:=\Pr \{\left. D_{\mu }(t)=k\right\vert D_{\mu }(0)=n_{0}\}=\binom{%
n_{0}}{k}e^{-\mu tk}(1-e^{-\mu t})^{n_{0}-k},\qquad 0\leq k\leq n_{0}.
\label{i}
\end{equation}%
The probabilities (\ref{i}) satisfy the difference-differential equations%
\begin{equation}
\left\{
\begin{array}{l}
\frac{d}{dt}p_{k}^{D}(t)=-\mu kp_{k}^{D}(t)+\mu (k+1)p_{k+1}^{D}(t)\quad
\text{for }0\leq k\leq n_{0}-1 \\
\frac{d}{dt}p_{n_{0}}^{D}(t)=-\mu n_{0}p_{n_{0}}^{D}(t)\quad \text{for }%
k=n_{0}%
\end{array}%
\right. ,  \label{acc}
\end{equation}%
with $p_{k}^{D}(0)=1_{\left\{ k=n_{0}\right\} }.$

Moreover, we denote the sublinear death process, with death rate $\mu >0$,
as $\widetilde{D}(t),$ $t\geq 0$ and its distribution is given by%
\begin{equation}
p_{k}^{\widetilde{D}}(t):=\Pr \{\left. \widetilde{D}_{\mu }(t)=k\right\vert
\widetilde{D}_{\mu }(0)=n_{0}\}=\left\{
\begin{array}{l}
e^{-\mu t}(1-e^{-\mu t})^{n_{0}-k},\qquad 1\leq k\leq n_{0} \\
(1-e^{-\mu t})^{n_{0}},\qquad k=0.\label{mo}%
\end{array}%
\right.
\end{equation}%
The probabilities $p_{k}^{\widetilde{D}}$ satisfy the following
difference-differential equations

\begin{equation*}
\left\{
\begin{array}{l}
\frac{d}{dt}p_{k}^{\widetilde{D}}(t)=-\mu (n_{0}-k+1)p_{k}^{\widetilde{D}%
}(t)+\mu (n_{0}-k)p_{k+1}^{\widetilde{D}}(t)\quad 1\leq k\leq n_{0} \\
\frac{d}{dt}p_{0}^{\widetilde{D}}(t)=\mu n_{0}p_{1}^{\widetilde{D}}(t)\quad
k=0%
\end{array}%
\right. ,
\end{equation*}%
with $p_{k}^{\widetilde{D}}(0)=1_{\left\{ k=n_{0}\right\} }.$ In order to
catch the probabilistic mechanism underlying (\ref{mo}) we should consider
that
\begin{equation}
\Pr \{\left. \widetilde{D}_{\mu }(t,t+dt)-\widetilde{D}_{\mu
}(t)=-1\right\vert \widetilde{D}_{\mu }(t)=k\}=\mu (n_{0}-k+1)dt+o(dt),\quad
1\leq k\leq n_{0}.  \notag
\end{equation}%
We note that in the linear death process the probability of cancellation of
an individual in $[t,t+dt)$ is proportional to the number of existing
components at time $t.$ On the other hand, in the sublinear case, this
probability is proportional to the number of deaths recorded up to time $t.$
This property makes the sublinear process more suitable for describing
epidemics and the diffusion of rumors.

The expected value of $\widetilde{D}(t),$ $t\geq 0$ can be evaluated as
follows%
\begin{eqnarray}
\mathbb{E}\widetilde{D}(t) &=&e^{-\mu
t}\sum_{k=0}^{n_{0}}(n_{0}-k)(1-e^{-\mu t})^{k}  \label{e} \\
&=&n_{0}\left[ 1-\left( 1-e^{-\mu t}\right) ^{n_{0}+1}\right] -\frac{1}{\mu }%
(1-e^{-\mu t})\frac{d}{dt}\left[ \sum_{k=0}^{n_{0}}(1-e^{-\mu t})^{k}\right]
\notag \\
&=&n_{0}\left[ 1-\left( 1-e^{-\mu t}\right) ^{n_{0}+1}\right] -(1-e^{-\mu t})%
\left[ e^{\mu t}-e^{\mu t}\left( 1-e^{-\mu t}\right)
^{n_{0}+1}-(n_{0}+1)\left( 1-e^{-\mu t}\right) ^{n_{0}}\right]   \notag \\
&=&n_{0}+\left( 1-e^{-\mu t}\right) ^{n_{0}+1}+e^{\mu t}\left( 1-e^{-\mu
t}\right) ^{n_{0}+2}+1-e^{\mu t}  \notag \\
&=&n_{0}+1-e^{\mu t}[1-(1-e^{-\mu t})^{n_{0}+1}],\qquad n_{0}\geq 1.  \notag
\end{eqnarray}%
It is easy to check that $\lim_{t\rightarrow +\infty }\mathbb{E}\widetilde{D}%
(t)=0,$ as expected. For the linear death process, the hitting time of the $k
$-th event, i.e.%
\begin{equation*}
T_{k}^{D}=\inf \{s>0:D_{\mu }(s)=k\}
\end{equation*}%
has the following distribution%
\begin{equation}
\Pr \{T_{k}^{D}\in dt\}=\binom{n_{0}}{k+1}e^{-\mu t(k+1)}(1-e^{-\mu
t})^{n_{0}-(k+1)}\mu (k+1)dt,\quad t\geq 0,\;0\leq k\leq n_{0}.
\end{equation}%
Thus we get%
\begin{equation*}
\Pr \{T_{k}^{D}<\infty \}=1,
\end{equation*}%
for any $k.$ On the other hand, in the sublinear case, the distribution of
\begin{equation*}
T_{k}^{\widetilde{D}}=\inf \{s>0:\widetilde{D}_{\mu }(s)=k\}
\end{equation*}%
reads%
\begin{equation}
\Pr \{T_{k}^{\widetilde{D}}\in dt\}=e^{-\mu t}(1-e^{-\mu
t})^{n_{0}-(k+1)}\mu (n_{0}-k)dt,\quad t\geq 0,\;0\leq k\leq n_{0}.
\end{equation}%
Moreover we get%
\begin{eqnarray*}
&&\Pr \{T_{k}^{\widetilde{D}}<\infty \}=\int_{0}^{+\infty }\Pr \{T_{k}^{%
\widetilde{D}}\in dt\}dt \\
&=&(n_{0}-k)\sum_{r=0}^{n_{0}-k-1}\binom{n_{0}-k-1}{r}(-1)^{r}\int_{0}^{+%
\infty }e^{-\mu t(r+1)}dt \\
&=&\sum_{r=0}^{n_{0}-k-1}\binom{n_{0}-k}{r+1}(-1)^{r}=1,
\end{eqnarray*}%
for any $k.$

\subsection{Linear subordinated death process}

Let%
\begin{equation*}
\mathcal{Y}(t):=D_{\mu }(N_{\lambda }(t)),\quad t\geq 0,
\end{equation*}%
where $D_{\mu }$ is a linear death process with parameter $\mu $ and $%
N_{\lambda }$ is an independent homogeneous Poisson process with parameter $%
\lambda .$ Then, for $0\leq k\leq n_{0},$ the probability mass function
reads
\begin{eqnarray}
q_{k}^{\mathcal{Y}}(t) &:=&\Pr \{\left. \mathcal{Y}(t)=k\right\vert \mathcal{%
Y}(0)=n_{0}\}  \label{uno} \\
&=&\sum_{l=0}^{\infty }\Pr \{\left. D_{\mu }(l)=k\right\vert D_{\mu
}(0)=n_{0}\}\Pr \{N_{\lambda }(t)=l\}  \notag \\
&=&\binom{n_{0}}{k}\sum_{j=0}^{n_{0}-k}\binom{n_{0}-k}{j}(-1)^{j}\exp
\left\{ -\lambda t(1-e^{-\mu (k+j)})\right\} .  \notag
\end{eqnarray}%
Since%
\begin{equation*}
q_{n_{0}}^{\mathcal{Y}}(t)=e^{-\lambda t(1-e^{-\mu n_{0}})},
\end{equation*}%
the waiting time of the first death of $\mathcal{Y}(t)$ is exponentially
distributed with parameter $\lambda (1-e^{-\mu n_{0}})$ and has the same
form of the corresponding probability of the subordinated linear birth
process, which was given in formula (\ref{st}). The probability generating
function of $\mathcal{Y}(t)$ can be written as follows:%
\begin{eqnarray}
G^{\mathcal{Y}}(u,t) &=&e^{-\lambda t}\sum_{l=0}^{\infty }\frac{(\lambda
t)^{l}}{l!}[1-e^{-\mu l}(1-u)]^{n_{0}}  \label{due} \\
&=&\sum_{m=0}^{n_{0}}\binom{n_{0}}{m}(-1)^{m}(1-u)^{m}\exp \{-\lambda
t(1-e^{-\mu m}\}.  \notag
\end{eqnarray}%
The mean value and variance are respectively given by%
\begin{equation}
\mathbb{E}\mathcal{Y}(t)=n_{0}e^{\lambda t(e^{-\mu }-1)}
\end{equation}%
and
\begin{eqnarray}
Var\mathcal{Y}(t) &=&Var\mathbb{E}\left[ \left. D_{\mu }(N_{\lambda
}(t))\right\vert N_{\lambda }(t)\right] +\mathbb{E}Var\left[ \left. D_{\mu
}(N_{\lambda }(t))\right\vert N_{\lambda }(t)\right] \\
&=&Var\left[ n_{0}e^{-\mu N_{\lambda }(t)}\right] +\mathbb{E}\left[
n_{0}e^{-\mu N_{\lambda }(t)}(1-e^{-\mu N_{\lambda }(t)})\right]  \notag \\
&=&n_{0}(n_{0}-1)e^{\lambda t(e^{-2\mu }-1)}-n_{0}^{2}e^{2\lambda t(e^{-\mu
}-1)}+n_{0}e^{\lambda t(e^{-\mu }-1)}.  \notag
\end{eqnarray}

\

We are interested now in the differential equation satisfied by the
distribution of the process $\mathcal{Y}(t),t>0.$ As a preliminary result we
prove the following lemma.

\begin{lemma}
The probability generating function given in (\ref{due}) satisfies the
following initial-value problem:%
\begin{equation}
\left\{
\begin{array}{l}
\frac{\partial }{\partial t}G^{\mathcal{Y}}(u,t)=-\lambda \left[
1-e^{(1-u)(1-e^{-\mu })\partial _{u}}\right] G^{\mathcal{Y}}(u,t) \\
G^{\mathcal{Y}}(u,0)=u^{n_{0}}%
\end{array}%
\right. ,\quad t,u\geq 0,  \label{tre}
\end{equation}%
where $e^{a\partial _{u}}:=\sum_{k=0}^{\infty }\frac{a^{k}\partial _{u}^{k}}{%
k!}$ is the shift operator.
\end{lemma}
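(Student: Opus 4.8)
The plan is to work throughout with the finite-sum representation of $G^{\mathcal{Y}}$ displayed in the second line of (\ref{due}), namely $G^{\mathcal{Y}}(u,t)=\sum_{m=0}^{n_{0}}\binom{n_{0}}{m}(-1)^{m}(1-u)^{m}\exp\{-\lambda t(1-e^{-\mu m})\}$, since it exhibits the $t$-dependence explicitly and reduces everything to the monomials $(1-u)^{m}$. First I would dispose of the initial datum: setting $t=0$ collapses every exponential to $1$, and the binomial theorem gives $\sum_{m=0}^{n_{0}}\binom{n_{0}}{m}(-1)^{m}(1-u)^{m}=(1-(1-u))^{n_{0}}=u^{n_{0}}$, as required.

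For the evolution equation, differentiating the finite sum term by term is immediate and yields $\partial_{t}G^{\mathcal{Y}}(u,t)=-\lambda\sum_{m=0}^{n_{0}}\binom{n_{0}}{m}(-1)^{m}(1-u)^{m}(1-e^{-\mu m})\exp\{-\lambda t(1-e^{-\mu m})\}$. The right-hand side of (\ref{tre}) must therefore reproduce exactly this expression, and by linearity the whole matter reduces to understanding how the operator $e^{(1-u)(1-e^{-\mu})\partial_{u}}$ acts on a single monomial $(1-u)^{m}$.

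This is the crux of the argument. Using the series definition $e^{a\partial_{u}}=\sum_{k\geq0}a^{k}\partial_{u}^{k}/k!$ with $a=(1-u)(1-e^{-\mu})$ placed to the left of the derivatives, together with the elementary identity $\partial_{u}^{k}(1-u)^{m}=(-1)^{k}\frac{m!}{(m-k)!}(1-u)^{m-k}$ (which vanishes for $k>m$, so the series terminates), I would compute $e^{(1-u)(1-e^{-\mu})\partial_{u}}(1-u)^{m}=(1-u)^{m}\sum_{k=0}^{m}\binom{m}{k}(-1)^{k}(1-e^{-\mu})^{k}=(1-u)^{m}e^{-\mu m}$, the last step being again the binomial theorem. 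The delicate point, and the one I expect to be the main obstacle, is the operator ordering: the factor $(1-u)^{k}$ arising from $a^{k}$ stands to the left of $\partial_{u}^{k}$, and after differentiation it recombines with $(1-u)^{m-k}$ to restore the full power $(1-u)^{m}$, so that the operator acts as pure multiplication by $e^{-\mu m}$ on each monomial. One must resist reading $e^{a\partial_u}$ as a genuine shift $f(u)\mapsto f(u+a)$, which would be wrong here because $a$ depends on $u$; only the formal series is meant.

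With this eigenvalue identity in hand the proof closes by inspection. Applying the operator to $G^{\mathcal{Y}}$ term by term replaces each factor $(1-u)^{m}$ by $e^{-\mu m}(1-u)^{m}$, whence $-\lambda[1-e^{(1-u)(1-e^{-\mu})\partial_{u}}]G^{\mathcal{Y}}(u,t)=-\lambda\sum_{m=0}^{n_{0}}\binom{n_{0}}{m}(-1)^{m}(1-u)^{m}(1-e^{-\mu m})\exp\{-\lambda t(1-e^{-\mu m})\}$, which coincides with the expression found above for $\partial_{t}G^{\mathcal{Y}}$. Thus, once the monomials $(1-u)^{m}$ are recognized as eigenfunctions of the operator with eigenvalues $e^{-\mu m}$, both the matching of the two sides and the verification of the initial condition are routine.
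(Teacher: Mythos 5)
Your verification is correct, but it proceeds by a genuinely different route from the paper. The paper \emph{derives} the operator equation structurally: it starts from the Kolmogorov equation of the Poisson subordinator, writes $\partial_t G^{\mathcal{Y}}=-\lambda G^{\mathcal{Y}}+\lambda\sum_r\Pr\{N_\lambda(t)=r\}\,G^{D}(u,r+1)$, expands the unit shift in $r$ via the iterated $t$-derivatives of $G^{D}$ (which satisfy $\partial_t G^D=\mu(1-u)\partial_u G^D$), recognizes Stirling numbers of the second kind in the resulting normal-ordered expressions, and then resums via the Dobinski formula and the exponential generating function of the Bell polynomials to arrive at $e^{(1-u)(1-e^{-\mu})\partial_u}$. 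You instead \emph{verify} the equation against the closed form in the second line of (\ref{due}), observing that the monomials $(1-u)^m$ are eigenfunctions of the normal-ordered operator with eigenvalue $e^{-\mu m}$; your computation of that eigenvalue identity, and your handling of the initial condition, are both correct. Your route is shorter and more elementary, and it usefully makes explicit the point (only implicit in the paper's definition $e^{a\partial_u}:=\sum_k a^k\partial_u^k/k!$) that with $u$-dependent $a$ the operator must be read as normal-ordered rather than as a true translation. What it gives up is the constructive content: the paper's argument explains where the operator comes from and would extend to settings where no closed-form expression for $G^{\mathcal{Y}}$ is available, whereas yours presupposes the explicit solution (\ref{due}). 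Incidentally, the eigenfunction computation you perform is essentially the same expansion the paper carries out in the proof of the subsequent theorem, where the operator is unpacked on the monomials $u^k$ to obtain equation (\ref{bo}).
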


\begin{proof}
By considering the equation governing the state probabilities of the Poisson
process, we can write%
\begin{eqnarray*}
\frac{\partial }{\partial t}G^{\mathcal{Y}}(u,t) &=&\sum_{l=0}^{\infty }%
\frac{\partial }{\partial t}\Pr \{N_{\lambda }(t)=l\}G^{D}(u,l) \\
&=&-\lambda \sum_{l=0}^{\infty }\Pr \{N_{\lambda }(t)=l\}G^{D}(u,l)+\lambda
\sum_{l=1}^{\infty }\Pr \{N_{\lambda }(t)=l-1\}G^{D}(u,l) \\
&=&-\lambda G^{\mathcal{Y}}(u,t)+\lambda \sum_{r=0}^{\infty }\Pr
\{N_{\lambda }(t)=r\}G^{D}(u,r+1) \\
&=&-\lambda G^{\mathcal{Y}}(u,t)+\lambda \sum_{r=0}^{\infty }\Pr
\{N_{\lambda }(t)=r\}e^{\partial _{r}}G^{D}(u,r),
\end{eqnarray*}%
where $G^{D}$ is the probability generating function of $D$. Since, for any $%
t\geq 0,$%
\begin{eqnarray*}
\frac{\partial }{\partial t}G^{D}(u,t) &=&\mu (1-u)\frac{\partial }{\partial
u}G^{D}(u,t) \\
\frac{\partial ^{2}}{\partial t^{2}}G^{D}(u,t) &=&\mu ^{2}(1-u)\frac{%
\partial }{\partial u}\left[ (1-u)\frac{\partial }{\partial u}\right]
G^{D}(u,t) \\
&=&-\mu ^{2}(1-u)\frac{\partial }{\partial u}G^{D}(u,t)+\mu ^{2}(1-u)^{2}%
\frac{\partial ^{2}}{\partial u^{2}}G^{D}(u,t)
\end{eqnarray*}%
\begin{eqnarray*}
\frac{\partial ^{3}}{\partial t^{3}}G^{D}(u,t) &=&\mu ^{3}(1-u)\frac{%
\partial }{\partial u}G^{D}(u,t)-3\mu ^{3}(1-u)^{2}\frac{\partial ^{2}}{%
\partial u^{2}}G^{D}(u,t)+ \\
&&+\mu ^{3}(1-u)^{3}\frac{\partial ^{3}}{\partial u^{3}}G^{D}(u,t)
\end{eqnarray*}%
and, analogously%
\begin{eqnarray*}
\frac{\partial ^{4}}{\partial t^{4}}G^{D}(u,t) &=&-\mu ^{4}(1-u)\frac{%
\partial }{\partial u}G^{D}(u,t)+7\mu ^{4}(1-u)^{2}\frac{\partial ^{2}}{%
\partial u^{2}}G^{D}(u,t)+ \\
&&-6\mu ^{4}(1-u)^{3}\frac{\partial ^{3}}{\partial u^{3}}G^{D}(u,t)+\mu
^{4}(1-u)^{4}\frac{\partial ^{4}}{\partial u^{4}}G^{D}(u,t),
\end{eqnarray*}%
we can write%
\begin{equation*}
\frac{\partial ^{j}}{\partial t^{j}}G^{D}(u,t)=\mu ^{j}\sum_{l=0}^{j-1}{{{{{{%
{{{{{{{{{{{{{\QATOPD \{ \} {j }{j-l}}}}}}}}}}}}}}}}}}}}(-1)^{l}(1-u)^{j-l}%
\frac{\partial ^{j-l}}{\partial u^{j-l}}G^{D}(u,t),
\end{equation*}%
where ${{{{{{{{{{{{{{{{{{{{{{{{{{{{\QATOPD \{ \} {n }{k}}}}}}}}}}}}}}}}}}}}}}%
}}}}}}}$ are the Stirling numbers of the second kind (see \cite{RIO}). Thus
we get (by considering that ${{{{{{{{{{{{{{{{{{{\QATOPD \{ \} {j }{0}}}}}}}}}%
}}}}}}}}}}}=0 $)%
\begin{eqnarray*}
\frac{\partial }{\partial t}G^{\mathcal{Y}}(u,t) &=&-\lambda G^{\mathcal{Y}%
}(u,t)+\lambda \sum_{r=0}^{\infty }\Pr \{N_{\lambda
}(t)=r\}\sum_{j=0}^{\infty }\frac{\mu ^{j}}{j!}\sum_{l=0}^{j}{{{{{{{{{{{{{{{{%
{{{\QATOPD \{ \} {j }{j-l}}}}}}}}}}}}}}}}}}}}(-1)^{l}(1-u)^{j-l}\frac{%
\partial ^{j-l}}{\partial u^{j-l}}G^{D}(u,r) \\
&=&-\lambda G^{\mathcal{Y}}(u,t)+\lambda \sum_{r=0}^{\infty }\Pr
\{N_{\lambda }(t)=r\}\sum_{j=0}^{\infty }\frac{\mu ^{j}}{j!}\sum_{l=0}^{j}{{{%
{{{{{{{{{{{{{{{{\QATOPD \{ \} {j }{l}}}}}}}}}}}}}}}}}}}}(-1)^{j-l}(1-u)^{l}%
\frac{\partial ^{l}}{\partial u^{l}}G^{D}(u,r) \\
&=&-\lambda G^{\mathcal{Y}}(u,t)+\lambda \sum_{j=0}^{\infty }\frac{(-\mu
)^{j}}{j!}B_{j}\left[ (u-1)\frac{\partial }{\partial u}\right] G^{\mathcal{Y}%
}(u,t),
\end{eqnarray*}%
where we have applied the Dobinski formula for the Bell polynomial $B_{n}$,
i.e.%
\begin{equation*}
B_{n}(x)=\sum_{k=0}^{n}x^{k}{{{{{{{{{{{{{{{{{{{\QATOPD \{ \} {n }{k}}}}}}}}}}%
}}}}}}}}}}.
\end{equation*}%
Equation (\ref{tre}) follows by applying the well-known exponential
generating function of the Bell's polynomial (see e.g. \cite{RIO}), i.e.%
\begin{equation*}
\sum_{n=0}^{\infty }\frac{t^{n}B_{n}(x)}{n!}=\exp \{(e^{t}-1)x\}.
\end{equation*}
\end{proof}

\begin{theorem}
The probability mass function of $\mathcal{Y}(t)), t\geq 0$ satisfies the
following equation%
\begin{equation}
\frac{d}{dt} q_{k}^{\mathcal{Y}}(t)=\lambda e^{-\mu k}\sum_{r=0}^{n_{0}-k}%
\binom{r+k}{r} q_{r+k}^{\mathcal{Y}}(t)(1-e^{-\mu })^{r}-\lambda q_{k}^{%
\mathcal{Y}}(t),\qquad 0\leq k\leq n_{0},  \label{bo}
\end{equation}%
with the initial condition
\begin{equation*}
q_{k}^{\mathcal{Y}}(0)=\left\{
\begin{array}{c}
1,\quad k=n_{0} \\
0,\quad k<n_{0}%
\end{array}%
\right. .
\end{equation*}
\end{theorem}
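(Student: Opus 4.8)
The plan is to obtain (\ref{bo}) by differentiating the closed-form mass function (\ref{uno}) directly, reading off the initial condition from the generating function (\ref{due}) at $t=0$. I would begin from
\[
q_k^{\mathcal{Y}}(t)=\binom{n_0}{k}\sum_{j=0}^{n_0-k}\binom{n_0-k}{j}(-1)^j e^{-\lambda t(1-e^{-\mu(k+j)})},
\]
and differentiate term by term in $t$. Each summand acquires the factor $-\lambda(1-e^{-\mu(k+j)})$, which I would split as $-\lambda+\lambda e^{-\mu(k+j)}$. The part carrying $-\lambda$ reassembles exactly into $-\lambda q_k^{\mathcal{Y}}(t)$, matching the last term of (\ref{bo}); so everything reduces to identifying the remaining contribution
\[
\lambda e^{-\mu k}\binom{n_0}{k}\sum_{j=0}^{n_0-k}\binom{n_0-k}{j}(-1)^j e^{-\mu j}\,e^{-\lambda t(1-e^{-\mu(k+j)})}
\]
with the gain term $\lambda e^{-\mu k}\sum_{r=0}^{n_0-k}\binom{r+k}{r}q_{r+k}^{\mathcal{Y}}(t)(1-e^{-\mu})^r$.

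To verify this matching I would insert (\ref{uno}) for $q_{r+k}^{\mathcal{Y}}$ into the gain term and interchange the (finite) order of summation, grouping the resulting double sum by the common exponential factor $e^{-\lambda t(1-e^{-\mu(k+m)})}$, i.e.\ setting $m=r+i$ where $i$ is the inner summation index from (\ref{uno}). Once the product of binomial coefficients is simplified (it collapses to $\tfrac{n_0!}{r!\,k!\,(m-r)!\,(n_0-k-m)!}$ on the gain side and to $\tfrac{n_0!}{k!\,m!\,(n_0-k-m)!}$ on the differentiated side), matching the coefficient of each exponential reduces to the elementary identity
\[
\sum_{r=0}^{m}\binom{m}{r}(1-e^{-\mu})^r(-1)^{m-r}=(-e^{-\mu})^m,
\]
which is just the binomial theorem. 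This is the one genuine computation in the argument; the rest is bookkeeping.

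Alternatively, and more conceptually, I could extract the coefficient of $u^k$ from a generating-function evolution equation of the type established in the preceding Lemma. Writing $G^{\mathcal{Y}}(u,t)=\sum_k q_k^{\mathcal{Y}}(t)u^k$, the relevant shift operator acts on $G^{\mathcal{Y}}$ as the affine substitution $u\mapsto e^{-\mu}u+(1-e^{-\mu})$ (equivalently it scales each eigenfunction $(1-u)^m$ by $e^{-\mu m}$, since $(1-u)\partial_u(1-u)^m=-m(1-u)^m$). Expanding $(e^{-\mu}u+(1-e^{-\mu}))^n$ by the binomial theorem and collecting the coefficient of $u^k$ produces precisely $e^{-\mu k}\sum_{r\ge 0}\binom{r+k}{r}q_{r+k}^{\mathcal{Y}}(1-e^{-\mu})^r$, which together with the $-\lambda q_k^{\mathcal{Y}}$ term yields (\ref{bo}). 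I would nonetheless treat the direct route as the primary argument, since the generating-function route requires the precise constant in the action of the shift operator to be pinned down.

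Finally, the initial condition is immediate: evaluating (\ref{due}) at $t=0$ gives $G^{\mathcal{Y}}(u,0)=\sum_{m=0}^{n_0}\binom{n_0}{m}(-1)^m(1-u)^m=(1-(1-u))^{n_0}=u^{n_0}$, so $q_k^{\mathcal{Y}}(0)=1_{\{k=n_0\}}$. The only real obstacle is the double-sum reindexing of the second paragraph; once the exponentials are grouped correctly the collapse to the binomial identity is forced, and no convergence or interchange issues arise because every sum is finite.
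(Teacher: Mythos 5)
Your primary argument is correct, but it is genuinely different from the paper's. The paper does \emph{not} differentiate the closed form (\ref{uno}); it starts from the operator equation (\ref{tre}) established in the preceding Lemma (itself obtained via Stirling numbers and the Dobinski formula for Bell polynomials), expands the exponential shift operator term by term against $G^{\mathcal{Y}}(u,t)=\sum_k q_k^{\mathcal{Y}}(t)u^k$, and then performs a chain of binomial reindexings to extract the coefficient of $u^l$. Your route --- differentiate (\ref{uno}), split $-\lambda\bigl(1-e^{-\mu(k+j)}\bigr)=-\lambda+\lambda e^{-\mu k}e^{-\mu j}$, substitute (\ref{uno}) into the gain term, group by $m=r+i$ so that the product of binomials collapses to $\tfrac{n_0!}{r!\,k!\,(m-r)!\,(n_0-k-m)!}$, and invoke $\sum_{r=0}^m\binom{m}{r}(1-e^{-\mu})^r(-1)^{m-r}=(-e^{-\mu})^m$ --- checks out exactly and is entirely self-contained: it bypasses the Lemma, all sums are finite, and matching the coefficient of each $e^{-\lambda t(1-e^{-\mu(k+m)})}$ is a sufficient condition that you verify directly (no linear-independence argument is needed). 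Your alternative route is essentially the paper's, but your observation that $e^{(1-u)(1-e^{-\mu})\partial_u}$ acts on polynomials as the Taylor shift $G(u)\mapsto G\bigl(e^{-\mu}u+(1-e^{-\mu})\bigr)$ (since $u+(1-u)(1-e^{-\mu})=e^{-\mu}u+1-e^{-\mu}$, equivalently $(1-u)^m\mapsto e^{-\mu m}(1-u)^m$) replaces the paper's page of reindexing with one binomial expansion of $(e^{-\mu}u+1-e^{-\mu})^n$, whose coefficient of $u^k$ is precisely $e^{-\mu k}\binom{n}{k}(1-e^{-\mu})^{n-k}$, i.e.\ the transition law (\ref{iss}). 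In short: the paper's derivation buys the operator/Bell-polynomial form of the generator, which it uses as a structural statement; yours buys brevity and an elementary, line-by-line verifiable computation. The initial condition is handled the same way in both (it is also immediate from $\mathcal{Y}(0)=D_\mu(0)=n_0$).
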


\begin{proof}
In order to derive the differential equation satisfied by (\ref{uno}) we
rewrite the first line in (\ref{tre}) as follows:%
\begin{eqnarray*}
\frac{\partial }{\partial t}G^{\mathcal{Y}}(u,t) &=&\lambda
\sum_{j=0}^{\infty }\frac{(1-u)^{j}(1-e^{-\mu })^{j}}{j!}\partial _{u}^{j}G^{%
\mathcal{Y}}(u,t)-\lambda G^{\mathcal{Y}}(u,t) \\
&=&\lambda \sum_{k=0}^{n_{0}}q_{k}^{\mathcal{Y}}(t)\sum_{j=0}^{\infty }\frac{%
(1-u)^{j}(1-e^{-\mu })^{j}}{j!}\partial _{u}^{j}u^{k}-\lambda G^{\mathcal{Y}%
}(u,t) \\
&=&\lambda \sum_{k=0}^{n_{0}}q_{k}^{\mathcal{Y}}(t)\sum_{j=0}^{\infty }\frac{%
(1-e^{-\mu })^{j}}{j!}k(k-1)...(k-j+1)u^{k-j}\sum_{l=0}^{j}\binom{j}{l}%
(-u)^{l}-\lambda G^{\mathcal{Y}}(u,t) \\
&=&\lambda \sum_{k=0}^{n_{0}}q_{k}^{\mathcal{Y}}(t)\sum_{j=0}^{k}(1-e^{-\mu
})^{j}\binom{k}{j}u^{k-j}\sum_{l=0}^{j}\binom{j}{l}(-u)^{l}-\lambda G^{%
\mathcal{Y}}(u,t) \\
&=&[l^{\prime }=k-j+l] \\
&=&\lambda \sum_{k=0}^{n_{0}}q_{k}^{\mathcal{Y}}(t)\sum_{j=0}^{k}(1-e^{-\mu
})^{j}\binom{k}{j}\sum_{l^{\prime }=k-j}^{k}\binom{j}{k-l^{\prime }}%
(-1)^{j-k+l^{\prime }}u^{l^{\prime }}-\lambda G^{\mathcal{Y}}(u,t) \\
&=&\lambda \sum_{k=0}^{n_{0}}q_{k}^{\mathcal{Y}}(t)\sum_{l=0}^{k}u^{l}\binom{%
k}{l}\sum_{j=k-l}^{k}(1-e^{-\mu })^{j}\binom{l}{k-j}(-1)^{j-k+l}-\lambda G^{%
\mathcal{Y}}(u,t) \\
&=&\lambda \sum_{l=0}^{n_{0}}u^{l}\sum_{k=l}^{n_{0}}\binom{k}{l}q_{k}^{%
\mathcal{Y}}(t)\sum_{j=k-l}^{k}(1-e^{-\mu })^{j}\binom{l}{l-k+j}%
(-1)^{j-k+l}-\lambda G^{\mathcal{Y}}(u,t) \\
&=&[j=k-l+j^{\prime }]
\end{eqnarray*}
\begin{eqnarray*}
&=&\lambda \sum_{l=0}^{n_{0}}u^{l}\sum_{k=l}^{n_{0}}\binom{k}{l}q_{k}^{%
\mathcal{Y}}(t)\sum_{j^{\prime }=0}^{l}(1-e^{-\mu })^{j^{\prime }+k-l}\binom{%
l}{j^{\prime }}(-1)^{j^{\prime }}-\lambda G^{\mathcal{Y}}(u,t) \\
&=&\lambda \sum_{l=0}^{n_{0}}u^{l}\sum_{k=l}^{n_{0}}\binom{k}{l}q_{k}^{%
\mathcal{Y}}(t)(1-e^{-\mu })^{k-l}e^{-\mu l}-\lambda G^{\mathcal{Y}}(u,t) \\
&=&[k=r+l] \\
&=&\lambda \sum_{l=0}^{n_{0}}u^{l}\sum_{r=0}^{n_{0}-l}\binom{r+l}{l}q_{r+l}^{%
\mathcal{Y}}(t)(1-e^{-\mu })^{r}e^{-\mu l}-\lambda G^{\mathcal{Y}}(u,t).
\end{eqnarray*}
\end{proof}

\begin{remark}
By comparing equation (\ref{bo}) with (\ref{acc}), we can see that, in the
subordinated case the time-derivative of $q_{k}^{\mathcal{Y}}(t)$ depends on
the probabilities $q_{r}^{\mathcal{Y}}(t)$, for any $k\leq r\leq n_{0}$,
while, in the standard case, the derivative of $p_{k}^{D}$ depends only on $%
p_{k+1}^{D}.$ This is due to the fact that the process $\mathcal{Y}(t),t>0$
performs downward jumps of arbitrary size, whose law can be derived from (%
\ref{uno}) and can be written as follows:%
\begin{equation}
\Pr \{\left. \mathcal{Y}(t+dt)=k\right\vert \mathcal{Y}(t)=r+k\}=\left\{
\begin{array}{c}
1-\lambda dt+\lambda dte^{-\mu k},\quad r=0 \\
\lambda dt\binom{r+k}{r}(1-e^{-\mu })^{r}e^{-\mu k},\quad r\geq 1.%
\end{array}%
\right.  \label{iss}
\end{equation}%
The first line in (\ref{iss}) means that, during the infinitesimal interval $%
\left[ t,t+dt\right) ,$ either the Poisson process (the "time") does not
change or it changes and all the individuals survive during a time span of
unit length.
\end{remark}

In our view the main result of this section is the density of the first
passage-time through the level\textbf{\ }$k$, i.e.%
\begin{equation*}
T_{k}^{\mathcal{Y}}=\inf \{s>0:\mathcal{Y}(s)=k\}\quad k=0,...n_{0},
\end{equation*}%
which is presented in the following theorem.

\begin{theorem}
The probability density of $T_{k}^{\mathcal{Y}}$ reads, for any $%
k=0,...,n_{0}-1,$%
\begin{eqnarray}
&&\Pr \{\left. T_{k}^{\mathcal{Y}}\in dt\right\vert \mathcal{Y}(0)=n_{0}\}/dt
\label{fr} \\
&=&\lambda e^{-\lambda t-\mu k}\binom{n_{0}}{k}\sum_{j=0}^{\infty }\frac{%
(\lambda t)^{j}e^{-\mu jk}}{j!}\left\{ \left[ 1-e^{-\mu (j+1)}\right]
^{n_{0}-k}-\left[ 1-e^{-\mu j}\right] ^{n_{0}-k}\right\} .  \notag
\end{eqnarray}
\end{theorem}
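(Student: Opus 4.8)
The plan is to follow the same conditioning-and-jump decomposition that succeeded for the iterated birth process and for $\mathcal{X}$. The event $\{T_k^{\mathcal{Y}}\in dt\}$ means that the process $\mathcal{Y}$ is strictly above $k$ just before $t$ and equals exactly $k$ just after; since $\mathcal{Y}$ moves only when the outer Poisson clock $N_\lambda$ ticks, I would first write
\begin{equation*}
\Pr\{T_k^{\mathcal{Y}}\in dt\mid\mathcal{Y}(0)=n_0\}=\sum_{h=1}^{n_0-k}\Pr\{\mathcal{Y}(t)=k+h,\ \mathcal{Y}(t+dt)=k\},
\end{equation*}
the sum being over all possible downward jump sizes $h$ that land exactly on $k$ from above. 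The next step is to condition on the value $N_\lambda(t)=j$ of the inner Poisson process and use that, in the infinitesimal interval, exactly one Poisson event occurs with probability $\lambda\,dt$, so that the inner clock advances from $j$ to $j+1$. This converts the two-time joint probability into the product of $\Pr\{N_\lambda(t)=j\}=e^{-\lambda t}(\lambda t)^j/j!$ with a one-step transition of the underlying linear death process $D_\mu$.

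Concretely, I would reduce the summand to $\Pr\{D_\mu(j)=k+h\mid D_\mu(0)=n_0\}\cdot\Pr\{D_\mu(j+1)=k\mid D_\mu(j)=k+h\}$. Both factors are available in closed binomial form from the distribution (\ref{i}): the first is $\binom{n_0}{k+h}e^{-\mu j(k+h)}(1-e^{-\mu j})^{n_0-k-h}$, and the one-step death transition from $k+h$ to $k$ is $\binom{k+h}{k}e^{-\mu k}(1-e^{-\mu})^{h}$. Assembling these gives
\begin{equation*}
\Pr\{T_k^{\mathcal{Y}}\in dt\}=\lambda e^{-\lambda t}\,dt\sum_{j=0}^{\infty}\frac{(\lambda t)^j}{j!}\sum_{h=1}^{n_0-k}\binom{n_0}{k+h}\binom{k+h}{k}e^{-\mu j(k+h)-\mu k}(1-e^{-\mu j})^{n_0-k-h}(1-e^{-\mu})^{h}.
\end{equation*}
The main bookkeeping task is then to collapse the inner sum over $h$ into the bracketed difference of binomial-theorem expressions appearing in (\ref{fr}).

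The key algebraic manoeuvre will be to recognize the binomial product identity $\binom{n_0}{k+h}\binom{k+h}{k}=\binom{n_0}{k}\binom{n_0-k}{h}$, which factors the constant $\binom{n_0}{k}e^{-\mu k}$ out front and turns the $h$-sum into $\sum_{h=1}^{n_0-k}\binom{n_0-k}{h}\big[e^{-\mu j}(1-e^{-\mu})\big]^{h}(1-e^{-\mu j})^{n_0-k-h}$. By the binomial theorem this sum, if extended to include $h=0$, equals $\big[(1-e^{-\mu j})+e^{-\mu j}(1-e^{-\mu})\big]^{n_0-k}=\big[1-e^{-\mu(j+1)}\big]^{n_0-k}$; subtracting the omitted $h=0$ term $(1-e^{-\mu j})^{n_0-k}$ produces exactly the curly-brace difference in (\ref{fr}). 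I expect this factorization-and-resummation to be the only real obstacle: once the binomial identity is invoked and the geometric-in-$h$ weight $e^{-\mu jh}$ is absorbed into the base of the power, the inner sum telescopes cleanly and the remaining $j$-sum is precisely the Poisson-weighted series displayed in the theorem, completing the proof.
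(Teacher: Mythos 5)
Your proposal is correct and follows essentially the same route as the paper's proof: the same decomposition over the level $k+h$ occupied just before the jump (the paper indexes by the level $h\in\{k+1,\dots,n_0\}$ rather than the jump size, a trivial relabelling), the same conditioning on $N_\lambda(t)=j$ with a single Poisson event in $[t,t+dt)$, the same binomial one-step transition $\binom{k+h}{k}e^{-\mu k}(1-e^{-\mu})^{h}$, and the same product identity $\binom{n_0}{k+h}\binom{k+h}{k}=\binom{n_0}{k}\binom{n_0-k}{h}$ followed by resummation via the binomial theorem to produce $[1-e^{-\mu(j+1)}]^{n_0-k}-[1-e^{-\mu j}]^{n_0-k}$. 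The only cosmetic difference is that the paper treats the $j=0$ term separately before absorbing it into the final series, whereas you keep it in the sum from the start, which is harmless since the binomial form of $\Pr\{D_\mu(0)=\cdot\}$ degenerates correctly.
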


\begin{proof}
We start by considering that, for $k=0,...,n_{0}-1$%
\begin{eqnarray}
&&\Pr \{\left. T_{k}^{\mathcal{Y}}\in dt\right\vert \mathcal{Y}(0) =n_{0}\}
\label{og} \\
&=&\sum_{h=k+1}^{n_{0}}\Pr \{D_{\mu }(N_{\lambda }(t))=h,D_{\mu }(N_{\lambda
}(t+dt))=k\}  \notag \\
&=&\sum_{h=k+1}^{n_{0}}\Pr \{D_{\mu }(N_{\lambda }(t))=h,D_{\mu }(N_{\lambda
}(t)+dN_{\lambda }(t))=k\}  \notag \\
&=&\lambda dt\sum_{j=1}^{\infty }\sum_{h=k+1}^{n_{0}}\Pr \{D_{\mu
}(j)=h,D_{\mu }(j+1)=k\}\Pr \{N_{\lambda }(t)=j\}+  \notag \\
&&+\lambda dt\Pr \{D_{\mu }(0) = n_{0},D_{\mu }(1)=k\}\Pr \{N_{\lambda
}(t)=0\}  \notag
\end{eqnarray}
\begin{eqnarray}
&=&\lambda dte^{-\lambda t}\sum_{j=1}^{\infty }\frac{(\lambda t)^{j}}{j!}%
\sum_{h=k+1}^{n_{0}}\binom{h}{k}e^{-\mu k}(1-e^{-\mu })^{h-k}\binom{n_{0}}{h}%
e^{-\mu jh}(1-e^{-\mu j})^{n_{0}-h}+  \notag \\
&&+\lambda dte^{-\lambda t}\binom{n_{0}}{k}e^{-\mu k}(1-e^{-\mu })^{n_{0}-k}
\notag \\
&=&\lambda dte^{-\lambda t-\mu k}\binom{n_{0}}{k}\sum_{j=1}^{\infty }\frac{%
(\lambda t)^{j}}{j!}\sum_{h=k+1}^{n_{0}}\binom{n_{0}-k}{n_{0}-h}(1-e^{-\mu
})^{h-k}e^{-\mu jh}(1-e^{-\mu j})^{n_{0}-h}+  \notag \\
&&+\lambda dte^{-\lambda t}\binom{n_{0}}{k}e^{-\mu k}(1-e^{-\mu })^{n_{0}-k}
\notag \\
&=&\lambda dte^{-\lambda t-\mu k}\binom{n_{0}}{k}\sum_{j=1}^{\infty }\frac{%
(\lambda t)^{j}}{j!}e^{-\mu jk}\left\{ \left[ e^{-\mu j}(1-e^{-\mu
})+1-e^{-\mu j}\right] ^{n_{0}-k}-\left[ 1-e^{-\mu j}\right]
^{n_{0}-k}\right\} +  \notag \\
&&+\lambda dte^{-\lambda t}\binom{n_{0}}{k}e^{-\mu k}(1-e^{-\mu })^{n_{0}-k},
\notag
\end{eqnarray}
which coincides with (\ref{fr}).
\end{proof}

\begin{remark}
By integrating (\ref{fr}) we get that, for $k=0,...,n_{0}-1$%
\begin{equation}
\Pr \{\left. T_{k}^{\mathcal{Y}}<\infty \right\vert \mathcal{Y}%
(0)=n_{0}\}=e^{-\mu k}\binom{n_{0}}{k}\sum_{j=0}^{\infty }e^{-\mu jk}\left\{ %
\left[ 1-e^{-\mu (j+1)}\right] ^{n_{0}-k}-\left[ 1-e^{-\mu j}\right]
^{n_{0}-k}\right\} .  \label{ic3}
\end{equation}%
We note that the probability (\ref{ic3}) has the same structure of formula
(36) in \cite{OP}, which is related to the iterated Poisson process, despite
the fact that the outer process $D$ has decreasing paths.
\end{remark}

\begin{remark}
An alternative form of (\ref{fr}), as a finite sum, can be obtained as
follows
\begin{equation*}
\Pr \{\left. T_{k}^{\mathcal{Y}}\in dt\right\vert \mathcal{Y}%
(0)=n_{0}\}/dt=\lambda e^{-\mu k}\binom{n_{0}}{k}\sum_{m=1}^{n_{0}-k}\binom{%
n_{0}-k}{m}(-1)^{m-1}(1-e^{-\mu m})\exp [-\lambda t(1-e^{\mu (k+m)})],
\end{equation*}%
which, by integration, gives%
\begin{equation*}
\Pr \{\left. T_{k}^{\mathcal{Y}}<\infty \right\vert \mathcal{Y}%
(0)=n_{0}\}=e^{-\mu k}\binom{n_{0}}{k}\sum_{m=0}^{n_{0}-k}(-1)^{m-1}\binom{%
n_{0}-k}{m}\frac{1-e^{-\mu m}}{1-e^{-\mu (m+k)}},
\end{equation*}%
for any $k\geq 0.$We note that, for $k=0,$ the extinction probability is
given by%
\begin{equation*}
\Pr \{\left. T_{0}^{\mathcal{Y}}<\infty \right\vert \mathcal{Y}%
(0)=n_{0}\}=-\sum_{r=1}^{n_{0}}(-1)^{r}\binom{n_{0}}{r}=-%
\sum_{r=0}^{n_{0}}(-1)^{r}\binom{n_{0}}{r}+1=1.
\end{equation*}%
For $k=n_{0}-1$, we have instead that%
\begin{eqnarray*}
\Pr \{\left. T_{n_{0}-1}^{\mathcal{Y}}<\infty \right\vert \mathcal{Y}%
(0)=n_{0}\} &=&e^{-\mu (n_{0}-1)}n_{0}\frac{1-e^{-\mu }}{1-e^{-\mu n_{0}}} \\
&=&e^{-\mu (n_{0}-1)}\frac{n_{0}}{1+e^{-\mu }+e^{-2\mu }+...+e^{-\mu
(n_{0}-1)}} \\
&<&e^{-\mu (n_{0}-1)}\frac{n_{0}}{n_{0}e^{-\mu (n_{0}-1)}}=1.
\end{eqnarray*}
\end{remark}

\begin{figure}[htp!]
\caption{Hitting times $\Pr \{T_{k}^{\mathcal{Y}}<\infty \}$}
\label{label2}\centering
\par
\includegraphics[scale=0.30]{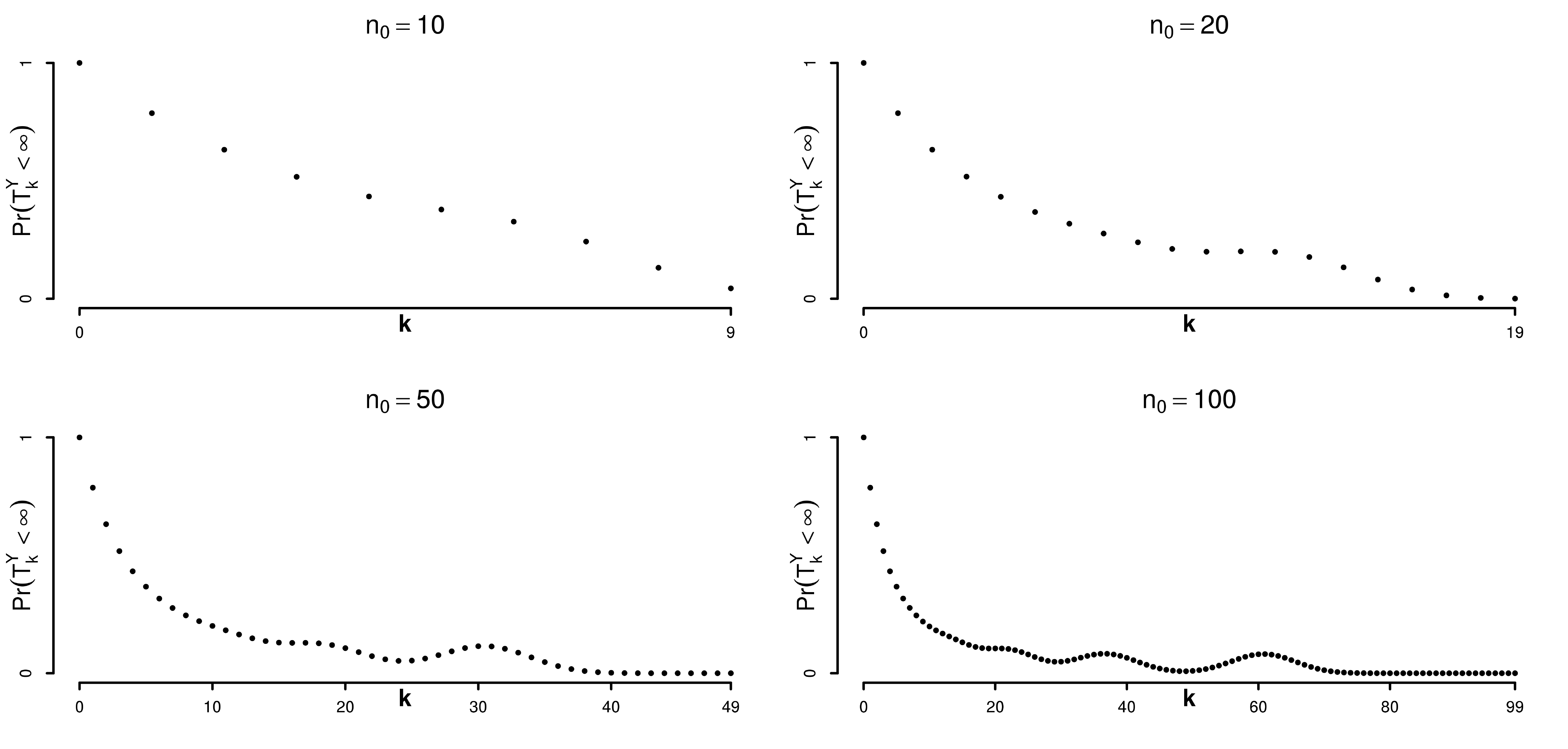}
\end{figure}

\

Unexpectedly enough Fig.3 (which is obtained here for $\mu =1/2$) shows that
the probabilities $\Pr \{\left. T_{k}^{\mathcal{Y}}<\infty \right\vert
\mathcal{Y}(0)=n_{0}\}$ do not display a monotonic behavior for sufficiently
large values of $n_{0}$.

\subsection{Sublinear subordinated death process}

Let
\begin{equation*}
\widetilde{\mathcal{Y}}(t):=\widetilde{D}_{\mu }(N_{\lambda }(t))
\end{equation*}%
where $\widetilde{D}_{\mu }$ is a sublinear death process with parameter $%
\mu $ and $N_{\lambda }$ is an independent Poisson process with parameter $%
\lambda .$ Then the probability mass function reads
\begin{eqnarray}
q_{k,n_{0}}^{\widetilde{\mathcal{Y}}}(t)&:=&\Pr \{\left. \widetilde{%
\mathcal{Y}}(t)=k\right\vert \widetilde{\mathcal{Y}}(0)=n_{0}\}  \label{no}
\\
&=&\sum_{l=0}^{\infty }\Pr \{\left. \widetilde{D}_{\mu }(l)=k\right\vert
\widetilde{D}_{\mu }(0)=n_{0}\}\Pr \{N_{\lambda }(t)=l\}  \notag \\
&=&e^{-\lambda t}\sum_{j=0}^{n_{0}-k}\binom{n_{0}-k}{j}(-1)^{j}\exp \left\{
\lambda te^{-\mu (1+j)}\right\} ,  \notag
\end{eqnarray}%
for $1\leq k\leq n_{0}$, while, for $k=0$, is given by%
\begin{equation}
q_{0,n_{0}}^{\widetilde{\mathcal{Y}}}(t)=e^{-\lambda t}\sum_{j=0}^{n_{0}}%
\binom{n_{0}}{j}(-1)^{j}\exp \left\{ \lambda te^{-\mu j}\right\} .
\label{no2}
\end{equation}%
Clearly $q_{0,n_{0}}^{\widetilde{\mathcal{Y}}}$ is a decreasing function of
the initial number of individuals, as can be directly checked, by
considering (\ref{mo}). The expected value of $\widetilde{\mathcal{Y}}(t),$ $%
t\geq 0$ can be evaluated, by considering (\ref{e}), as follows%
\begin{eqnarray*}
\mathbb{E}\widetilde{\mathcal{Y}}(t) &=&e^{-\lambda t}\sum_{j=0}^{\infty
}\left\{ n_{0}+1-e^{\mu j}[1-(1-e^{-\mu j})^{n_{0}+1}]\right\} \frac{%
(\lambda t)^{j}}{j!} \\
&=&n_{0}+1-e^{-\lambda t(1-e^{\mu })}+\sum_{j=0}^{\infty }(1-e^{-\mu
j})^{n_{0}+1}\frac{(\lambda te^{\mu })^{j}}{j!} \\
&\leq &n_{0}-\left( 1-e^{-\lambda t(1-e^{-\mu })}\right) .
\end{eqnarray*}%
In the case of the subordinated sublinear death process, we have no
markovianity and thus we cannot evaluate explicitly the distribution of the
hitting times $T_{k}^{\widetilde{\mathcal{Y}}},$ $0\leq k\leq n_{0}-1.$ For
this reason we consider here the instant of the first downcrossing of the
level $k$, i.e.%
\begin{equation*}
V_{k}^{\widetilde{\mathcal{Y}}}=\inf \{s>0:\widetilde{\mathcal{Y}}(s)\leq
k\},\qquad 0\leq k\leq n_{0}-1.
\end{equation*}%
For $k\geq 1$, we can write
\begin{eqnarray*}
\Pr \{V_{k}^{\widetilde{\mathcal{Y}}}>t\} &=&\Pr \{\widetilde{\mathcal{Y}}%
(t)\geq k\} \\
&=&[\text{by (\ref{no})}] \\
&=&\sum_{l=k}^{n_{0}}\sum_{r=0}^{n_{0}-l}\binom{n_{0}-l}{r}(-1)^{r}\exp
\left\{ -\lambda t(1-e^{-\mu (1+r)})\right\}  \\
&=&\sum_{r=0}^{n_{0}-k}(-1)^{r}\exp \left\{ -\lambda t(1-e^{-\mu
(1+r)})\right\} \sum_{l=k}^{n_{0}-r}\binom{n_{0}-l}{r}.
\end{eqnarray*}%
The inner sum can be treated as follows%
\begin{eqnarray*}
\sum_{l=k}^{n_{0}-r}\binom{n_{0}-l}{r} &=&\sum_{l=0}^{n_{0}-r-k}\binom{%
n_{0}-l-k}{r} \\
&=&[\text{by eq. (2.25) in \cite{BOS}}] \\
&=&\binom{n_{0}-k+1}{r+1},
\end{eqnarray*}%
so that we get%
\begin{equation*}
\Pr \{V_{k}^{\widetilde{\mathcal{Y}}}>t\}=\sum_{r=1}^{n_{0}-k+1}(-1)^{r-1}%
\binom{n_{0}-k+1}{r}\exp \left\{ -\lambda t(1-e^{-\mu r})\right\} ,
\end{equation*}%
which vanishes, for $t\rightarrow \infty $, for any $k\geq 1$. For $k=0,$ by
applying (\ref{no2}) we have, instead, the following extinction probability:%
\begin{equation*}
\Pr \{V_{0}^{\widetilde{\mathcal{Y}}}<t\}=\Pr \{\widetilde{\mathcal{Y}}%
(t)=0\}=e^{-\lambda t}\sum_{j=0}^{n_{0}}\binom{n_{0}}{j}(-1)^{j}\exp \left\{
\lambda te^{-\mu j}\right\} ,
\end{equation*}%
which, for $t\rightarrow \infty $, is equal to $1.$

\section*{Acknowledgement}
We thank Dr. Bruno Toaldo for providing the figures presented in this paper.


\begin{thebibliography}{99}
\bibitem{ABR} Alipour M., Beghin L., Rostamy D. (2013), Generalized
fractional non-linear birth processes, \emph{Method. Comput. Applied Probab.}%
, 1-16.

\bibitem{BOS} Beghin L., Orsingher E. (2012), Poisson process with different
Brownian clocks, \emph{Stochastics}, 84 (1), 79--112.

\bibitem{CAH1} Cahoy D., Sibatov R., Uchaikin V.(2008), Fractional
processes: from Poisson to branching one. \emph{International Journal of
Bifurcation and Chaos} (in Applied Sciences and Engineering), 18(9),
2717-2725.

\bibitem{CAH3} Cahoy D., Polito F. (2012), Simulation and estimation for the
fractional Yule process, \emph{Method. Comput. Applied Probab.}, 14 (2),
383-403.

\bibitem{CAH2} Cahoy D., Polito F. (2014), Parameter estimation for
fractional birth and fractional death processes. \emph{Statistics and
Computing}, 24(2), 211-222.

\bibitem{DIC} Di Crescenzo A., Martinucci B., Zacks, S. (2015), Compound
Poisson process with Poisson subordinator. \emph{J. Appl. Probab.}, 52 (2),
1-17.

\bibitem{DIN} Ding X., Giesecke K., Tomecek P.I. (2009), Time-changed birth
processes and multi-name credit derivatives, \emph{Operations Research}, 57
(4), 990--1005.

\bibitem{DIN2} Ding X. (2010), Time-changed birth processes, random
thinning, and correlated default risk, Stanford Univ., PhD thesis.

\bibitem{DON} Donnelly, P., Kurtz, T., Marjoram, P. (1993), Correlation and
variability in birth processes. \emph{J. Appl. Probab.} 30(2), 275-284.

\bibitem{KUM} Kumar A., Nane E., Vellaisamy P. (2011), Time-changed Poisson
processes, \emph{Stat. Probab. Letters} 81 (12), 1899-1910.

\bibitem{OP} Orsingher E., Polito F. (2012), Compositions, random sums and
continued random fractions of Poisson and fractional Poisson processes.
\emph{Journal of Statistical Physics,} \textbf{148}, (2), 233-249.

\bibitem{OPS} Orsingher E., Polito F., Sakhno L. (2010), Fractional
non-linear, linear and sublinear death processes. \emph{Journal of
Statistical Physics,} \textbf{141}, (1), 68-93.

\bibitem{ORT} Orsingher E., Ricciuti C., Toaldo B. (2014), Population models
at stochastic times. \textit{ArXiv} 1407.1173.

\bibitem{RIO} Riordan J. (1958), \emph{An Introduction to Combinatorial
Analysis,} John Wiley and Sons.
\end{thebibliography}
\end{document}